\DeclareFontFamily{U}{wncy}{}
    \DeclareFontShape{U}{wncy}{m}{n}{<->wncyr10}{}
    \DeclareSymbolFont{mcy}{U}{wncy}{m}{n}
    \DeclareMathSymbol{\Sh}{\mathord}{mcy}{"58}
\begin{document}

\newcommand{\ci}[1]{_{ {}_{\scriptstyle #1}}}

\newcommand{\norm}[1]{\ensuremath{\|#1\|}}
\newcommand{\abs}[1]{\ensuremath{\vert#1\vert}}
\newcommand{\p}{\ensuremath{\partial}}
\newcommand{\pr}{\mathcal{P}}

\newcommand{\pbar}{\ensuremath{\bar{\partial}}}
\newcommand{\db}{\overline\partial}
\newcommand{\D}{\mathcal{D}}
\newcommand{\B}{\mathbb{B}}
\newcommand{\Sp}{\mathbb{S}}
\newcommand{\T}{\mathbb{T}}
\newcommand{\R}{\mathbb{R}}
\newcommand{\Z}{\mathbb{Z}}
\newcommand{\C}{\mathbb{C}}
\newcommand{\N}{\mathbb{N}}
\newcommand{\scrH}{\mathcal{H}}
\newcommand{\scrL}{\mathcal{L}}
\newcommand{\td}{\widetilde\Delta}

\newcommand{\La}{\langle }
\newcommand{\Ra}{\rangle }
\newcommand{\rk}{\operatorname{rk}}
\newcommand{\card}{\operatorname{card}}
\newcommand{\ran}{\operatorname{Ran}}
\newcommand{\osc}{\operatorname{OSC}}
\newcommand{\im}{\operatorname{Im}}
\newcommand{\re}{\operatorname{Re}}
\newcommand{\tr}{\operatorname{tr}}
\newcommand{\vf}{\varphi}
\newcommand{\f}[2]{\ensuremath{\frac{#1}{#2}}}


\newcommand{\entrylabel}[1]{\mbox{#1}\hfill}

\newenvironment{entry}
{\begin{list}{X}%
  {\renewcommand{\makelabel}{\entrylabel}%
      \setlength{\labelwidth}{55pt}%
      \setlength{\leftmargin}{\labelwidth}
      \addtolength{\leftmargin}{\labelsep}%
   }%
}%
{\end{list}}


\numberwithin{equation}{section}
\newtheorem{dfn}{Definition}[section]
\newtheorem{thm}{Theorem}[section]
\newtheorem{lm}[thm]{Lemma}
\newtheorem{cor}[thm]{Corollary}
\newtheorem{conj}[thm]{Conjecture}
\newtheorem{prob}[thm]{Problem}
\newtheorem{prop}[thm]{Proposition}
\newtheorem*{prop*}{Proposition}

\theoremstyle{remark}
\newtheorem{rem}[thm]{Remark}
\newtheorem*{rem*}{Remark}

\newtheorem{quest}[thm]{Question}

\title{Multilinear Dyadic Operators And Their Commutators}

\author{Ishwari Kunwar}

\address{Ishwari Kunwar, School of Mathematics\\ Georgia Institute of Technology\\ 686 Cherry Street\\ Atlanta, GA USA 30332-0160}
\email{ikunwar3@math.gatech.edu}

\subjclass[2000]{Primary }
\keywords{Multilinear Paraproducts, multilinear Haar Multipliers, dyadic BMO functions, Commutators.}

\begin{abstract} 
We introduce multilinear analogues of dyadic paraproduct operators and Haar Multipliers, and study boundedness properties of these operators and their commutators. We also characterize dyadic $BMO$ functions via boundedness of certain paraproducts and also via boundedness of the commutators of multilinear Haar Multipliers and paraproduct operators. 
\end{abstract}

\maketitle
\setcounter{tocdepth}{1}
\tableofcontents
\section{Introduction and statement of main results}
\noindent
Dyadic operators have attracted a lot of attention in the recent years. The proof of so-called $A_2$ theorem (see \cite{Hyt}) consisted in representing a general Calder$\acute{\text{o}}$n-Zygmund operator as an average of dyadic shifts, and then verifying some testing conditions for those simpler dyadic operators. It seems reasonable to  believe that, taking a similar approach, general multilinear Calder$\acute{\text{o}}$n-Zygmund operators can be studied by studying multilinear dyadic operators. Regardless of this possibility, multilinear dyadic operators in their own right are an important class of objects in Harmonic Analysis. Statements regarding those operators can be translated into the non-dyadic world, and are sometimes simpler to prove.\\

\noindent  
In this paper we introduce multilinear analogues of dyadic operators such as paraproducts and Haar multipliers, and study their boundedness properties. Corresponding theory of linear dyadic operators, which we will be using very often, can be found in \cite{Per}. In \cite{BMNT}, the authors have studied boundedness properties of bilinear paraproducts defined in terms of so-called ``smooth molecules". The paraproduct operators we study are more general multilinear operators, but defined in terms of indicators and Haar functions of dyadic intervals. In \cite{CRW} Coifman, Rochberg and Weiss proved that the commutator of a $BMO$ function with a singular integral operator is bounded in $L^p$, $1<p<\infty.$ The necessity of $BMO$ condition for the boundedness of the commutator was also established for certain singular integral operators, such as the Hilbert transform. S. Janson \cite{Jan} later studied its analogue for linear martingale transforms. In this paper we study commutators of multilinear dyadic operators, and characterize dyadic $BMO$ functions via boundedness of these commutators. For the corresponding theory for general multilinear Calder$\acute{\text{o}}$n-Zygmund operators we refer to \cite{GT} and \cite{LOPTT}.\\
 
\noindent
We organize the paper as follows:\\

\noindent
In section 2, we present an overview of some of the main tools we will be using in this paper. These include: the Haar system, linear Haar multipliers, dyadic maximal/square functions, linear/bilinear paraproduct operators and the space of dyadic $BMO$ functions. For more details we refer to \cite{Per}.\\

\noindent
In section 3, we obtain a decomposition of the pointwise product of $m$ functions, $m \geq 2,$ which generalizes the paraproduct decomposition of two functions.  On the basis of this decomposition we define multilinear paraproducts and investigate their boundedness properties as operators on products of Lebesgue spaces. We also define multilinear anologue of the linear paraproduct operator $\pi_b$, and characterize dyadic $BMO$ functions via boundedness of certain multilinear paraproduct operators.\\

\noindent
In section 4, we define multilinear Haar multipliers in a way consistent with the definition of linear Haar multipliers and multilinear paraproducts, and then investigate their boundedness properties. We also study boundedness properties of their commutators with dyadic $BMO$ functions, and provide a characterization of dyadic $BMO$ functions via the boundedness of those multilinear commutators. In particular, we show that the commutators of the multilinear paraproducts with a function $b$ are bounded if and only if $b$ is a dyadic $BMO$ function. \\

\noindent
Our main results involve the following operators:\\
\begin{itemize}
	
\item  $\displaystyle P^{\vec{\alpha}}(f_1,f_2,\ldots,f_m) = \sum_{I\in\mathcal{D}} \left(\prod_{j=1}^m f_j(I,\alpha_j)\right) h_I^{\sigma(\vec{\alpha})}, \quad \vec{\alpha} \in \{0,1\}^m \backslash\{(1,1,\ldots,1)\}. $\\

\item $\displaystyle \pi_b^{\vec{\alpha}}(f_1, f_2, \ldots, f_m) = \sum_{I \in \mathcal{D}} \La b , h_I \Ra \left(\prod_{j=1}^m f_j(I,\alpha_j)\right)  h_I^{1+\sigma(\vec{\alpha})},\quad \vec{\alpha} \in \{0,1\}^m.$\\

\item  $\displaystyle T_\epsilon^{\vec{\alpha}} (f_1,f_2, \ldots,f_m) := \sum_{I\in \D} \epsilon_I \left(\prod_{j=1}^m f_j(I,\alpha_j)\right) h_I^{\sigma(\vec{\alpha})},$ \\

 $ \quad \vec{\alpha} \in =\{0,1\}^m \backslash \{(1,1,\ldots,1)\}, \, \epsilon = \{\epsilon_I\}_{I\in \D} \text{ bounded}.$\\

\item $\displaystyle [b,T_\epsilon^{\vec{\alpha}}]_i(f_1,f_2,\ldots,f_m)(x) := b(x)T_\epsilon^{\vec{\alpha}}(f_1,f_2,\ldots,f_m)(x) - T_\epsilon^{\vec{\alpha}}(f_1, \ldots, bf_i,\ldots,f_m)(x),$\\

 $ 1 \leq i \leq m$, $ \vec{\alpha} \in \{0,1\}^m \backslash\{(1,1,\ldots,1)\},\, \epsilon = \{\epsilon_I\}_{I\in \D} \text{ bounded and } b\in BMO^d.$\\
\end{itemize}

\noindent In the above definitions, $\D := \{[m2^{-k}, (m+1)2^{-k}): m,k\in \mathbb{Z}\}$ is the standard dyadic grid on $\R$ and $h_I$'s are the Haar functions defined by $h_I = \displaystyle \frac{1}{\abs{I}^{1/2}}\left(\mathsf{1}_{I_+} - \mathsf{1}_{I_-}\right),$ where $I_-$ and $I_+$ are the left and right halves of $I.$ With $\left< \;,\;\right>$ denoting the standard inner product in $L^2(\R),$ $f_i(I,0) := \left< f_i,h_I\right>$ and $\displaystyle f_i(I,1) := \La f_i, h_I^2\Ra = \frac{1}{\abs{I}} \int_I f_i,$ the average of $f_i$ over $I.$ The Haar coefficient $\La f_i, h_I\Ra$ is sometimes denoted by $\widehat{f_i}(I)$ and the average of $f_i$ over $I$ by $\La f_i \Ra_I$. For $\vec{\alpha} \in \{0,1\}^m,$ $\sigma(\vec{\alpha})$ to denotes the number of 0 components in $\vec{\alpha}$. For convenience, we will denote the set $\{0,1\}^m \backslash\{(1,1,\ldots,1)\}$ by $U_m.$\\

\noindent
In the following main results $L^p$ stands for the Lebesgue space $L^p(\R):= \left\{f:\norm{f}_p < \infty \right\} $ with $\displaystyle\norm{f}_p = \norm{f}_{L^p} := \left(\int_\R \abs{f(x)}^p dx \right)^{1/p}.$ The Weak $L^p$ space, also denoted by $L^{p,\infty}$, is the space of all functions $f$ such that
$$ \norm{f}_{L^{p,\infty}(\mathbb{R})}:= \sup_{t>0} t \, \left\vert \{x\in \mathbb{R}: f(x) >t \} \right\vert^{1/p} < \infty.$$
Moreover, $ \displaystyle \norm{b}_{BMO^d}:=\sup_{I\in \D}\frac{1}{\abs{I}}\int_I \abs{b(x) - \La b \Ra_I} \,dx < \infty, $ is the dyadic $BMO$ norm of $b.$\\

\noindent 
We now state our main results:\\

\noindent
\noindent
\textbf{Theorem:} Let $ \vec{\alpha} = (\alpha_1,\alpha_2,\ldots,\alpha_m) \in \{0,1\}^m$ and $ 1 < p_1, p_2, \ldots, p_m < \infty$ with $\displaystyle \sum_{j=1}^m \frac{1}{p_j} = \frac{1}{r}.$ Then
\begin{enumerate}[label = $(\alph*)$]
\item For $\vec{\alpha}  \neq (1,1,\ldots,1),$  $\displaystyle \left\Vert P^{\vec{\alpha}}(f_1,f_2,\ldots,f_m)\right\Vert_r \lesssim \prod_{j=1}^m\norm{f_j}_{p_j}.$
\item For $\sigma(\vec{\alpha}) \leq 1,$ $\displaystyle \left\Vert\pi_b^{\vec{\alpha}}(f_1,f_2,\ldots,f_m)\right\Vert_r \lesssim \norm{b}_{BMO^d}\prod_{j=1}^m\norm{f_j}_{p_j},$ if and only if $b \in BMO^d.$\\

\item For $\sigma(\vec{\alpha}) > 1,$ $\displaystyle\left\Vert\pi_b^{\vec{\alpha}}(f_1,f_2,\ldots,f_m)\right\Vert_r \leq C_b \prod_{j=1}^m\norm{f_j}_{p_j},$ if and only if $\displaystyle\sup_{I\in \D} \frac{\abs{\La b,h_I\Ra}}{\sqrt{\abs{I}}} < \infty.$
\end{enumerate}
In each of the above cases, the paraproducts are weakly bounded if $1\leq p_1, p_2, \ldots, p_m < \infty$.\\

\noindent
\textbf{Theorem:}  Let $\epsilon = \{\epsilon_I\}_{I\in\D}$ be a given sequence and let $\vec{\alpha} = (\alpha_1,\alpha_2, \ldots,\alpha_m) \in U_m.$  Let $1<p_1,p_2, \ldots,p_m<\infty$ with
$$\displaystyle \sum_{j=1}^m \frac{1}{p_j} = \frac{1}{r}.$$
Then $T_\epsilon^{\vec{\alpha}}$ is bounded from $L^{p_1}\times L^{p_2} \times \cdots\times L^{p_m}$ to $L^r$ if and only if $\norm{\epsilon}_\infty:= \displaystyle \sup_{I \in \D}\abs{\epsilon_I} < \infty.$\\
Moreover, $T_\epsilon^{\vec{\alpha}}$ has the corresponding weak-type boundedness if $1 \leq p_1,p_2, \ldots,p_m<\infty.$\\

\noindent
\textbf{Theorem:} Let $\vec{\alpha} = (\alpha_1,\alpha_2,\ldots,\alpha_m) \in U_m,$ $1\leq i \leq m,$ and $1<p_1,p_2, \ldots,p_m, r < \infty$ with 
$$\sum_{j=1}^m \frac{1}{p_j} = \frac{1}{r}.$$ Suppose $b \in L^p$ for some $p \in (1,\infty).$ Then the following two statements are equivalent.
\begin{enumerate}[label = $(\alph*)$]
\item  $b\in BMO^d.$\\
\item  $\displaystyle [b,T_\epsilon^{\vec{\alpha}}]_i:L^{p_1}\times L^{p_2} \times \cdots\times L^{p_m}\rightarrow L^r $ is bounded for every bounded sequence $\epsilon = \{\epsilon_I\}_{I\in \D}.$\\
\end{enumerate}

\noindent
In particular,  $b\in BMO^d$ if and only if
$[b,P^{\vec{\alpha}}]_i:L^{p_1}\times L^{p_2} \times \cdots\times L^{p_m}\rightarrow L^r$ is bounded.\\

\noindent
\textbf{Acknowledgement:} The author would like to thank Brett Wick for suggesting him this research project, and for providing valuable suggestions.

\section{Notation and preliminaries}

\subsection{The Haar System and the Haar multipliers:}
Let $\D$ denote the standard dyadic grid on $\R,$  $$\D = \{[m2^{-k}, (m+1)2^{-k}): m,k\in \mathbb{Z}\}.$$
Associated to each dyadic interval $I$ there is a Haar function $h_I$ defined by
$$h_I(x) = \frac{1}{\abs{I}^{1/2}}\left(\mathsf{1}_{I_+} - \mathsf{1}_{I_-}\right),$$
where $I_-$ and $I_+$ are the left and right halves of $I.$\\

\noindent
The collection of all Haar functions $\{h_I: I \in \D\}$ is an orthonormal basis of $L^2(\R),$ and an unconditional basis of $L^p$ for $ 1 < p < \infty.$ In fact, if a sequence $\epsilon = \{\epsilon_I\}_{I \in \mathcal{D}}$ is bounded, the operator $T_\epsilon$ defined by
$$T_\epsilon f(x) = \sum_{I \in \mathcal{D}} \epsilon_I \La f, h_I \Ra  h_I $$
is bounded in $L^p$ for all $1 < p  < \infty.$ The converse also holds. The operator $T_\epsilon$ is called the Haar multiplier with symbol $\epsilon.$ \\

\subsection{The dyadic maximal function:}
Given a function $f$, the dyadic Hardy-Littlewood maximal function $M^df$ is defined by
$$M^df(x):= \sup_{x\in I\in \D} \frac{1}{\abs{I}} \int_I \abs{f(t)}\,dt.$$

\noindent
For the convenience of notation, we will just write $M$ to denote the dyadic maximal operator. 
Clearly, $M$ is bounded on $L^\infty.$ It is well-known that $M$ is of weak type $(1,1)$ and strong type $(p,p)$ for all $1<p<\infty.$\\

\subsection{The dyadic square function:}
The dyadic Littlewood-Paley square function of a function $f$ is defined by
$$S f(x):= \left(\sum_{I \in \D} \frac{\abs{\La f,h_I \Ra}^2}{\abs{I}} \mathsf{1}_I(x) \right)^{1/2}.$$
For $f\in L^p$ with $1<p<\infty,$ we have $\norm{Sf}_p \approx \norm{f}_p$ with equality when $p=2.$\\

\subsection{BMO Space} A locally integrable function $b$ is said to be of bounded mean oscillation if
$$\norm{b}_{BMO}:=\sup_{I}\frac{1}{\abs{I}}\int_I \abs{b(x) - \La b \Ra_I} \,dx < \infty, $$
where the supremum is taken over all intervals in $\mathbb{R}.$ The space of all functions of bounded mean oscillation is denoted by $BMO.$\\

\noindent
If we take the supremum over all dyadic intervals in $\mathbb{R},$ we get a larger space of dyadic BMO functions which we denote by $BMO^d.$\\

\noindent
For $0<r<\infty,$ define
$$ BMO_r = \left\{b \in L_{loc}^r(\mathbb{R}): \norm{b}_{BMO_r} < \infty \right\},$$
where, $\displaystyle \norm{b}_{BMO_r} := \left(\sup_{I}\frac{1}{\abs{I}}\int_I \abs{b(x) - \La b \Ra_I}^r \,dx \right)^{1/r}.$\\

\noindent For any $0<r<\infty,$ the norms $\norm{b}_{BMO_r}$ and $\norm{b}_{BMO}$ are equivalent. The equivalence of norms for $r > 1$ is well-known and follows from John-Nirenberg's lemma (see \cite{JN}), while the equivalence for $0<r<1$ has been proved by Hanks in \cite{HR}. (See also \cite{SE}, page 179.)\\

\noindent
For $r=2$, it follows from the orthogonality of Haar system that 
$$ \norm{b}_{BMO_2^d} = \left(\sup_{I \in \D} \frac{1}{\abs{I}} \sum_{J \subseteq I} \abs{\widehat{b}(J)}^2\right)^{1/2}.$$

\subsection{The linear/ bilinear paraproducts:}
Given two functions $f_1$ and $f_2$, the point-wise product $f_1f_2$ can be decomposed into the sum of bilinear paraproducts:
 $$ f_1f_2 = P^{(0,0)}(f_1,f_2) + P^{(0,1)}(f_1,f_2) + P^{(1,0)}(f_1,f_2),$$
 where for $\vec{\alpha} = (\alpha_1, \alpha_2) \in \{0,1\}^2$,
 $$ P^{\vec{\alpha}}(f_1,f_2) = \displaystyle \sum_{I \in \mathcal{D}} f_1(I,\alpha_1) f_2(I, \alpha_2) h_I^{\sigma(\vec{\alpha})}$$
with $ f_i(I,0) = \La f_i, h_I \Ra, \;\; f_i(I,1) = \La f_i \Ra_I,\;  \sigma(\vec{\alpha}) = \# \{i: \alpha_i = 0\},  \text{ and }  h_I^{\sigma(\vec{\alpha})}$ being the pointwise product $h_I h_I \ldots h_I$ of $ \sigma(\vec{\alpha})$ factors. \\

The paraproduct $P^{(0,1)}(f_1,f_2)$ is also denoted by $\pi_{f_1}(f_2),$ i.e.,\\
$$ \pi_{f_1}(f_2) = \sum_{I \in \mathcal{D}} \La f_1, h_I \Ra \La f_2 \Ra_I h_I.$$
Observe that $$\La \pi_{f_1}(f_2), g \Ra = \left\La \sum_{I \in \mathcal{D}} \La f_1, h_I \Ra \La f_2 \Ra_I h_I, g \right\Ra = \sum_{I \in \mathcal{D}} \La f_1, h_I \Ra \La f_2 \Ra_I \La g, h_I \Ra $$
which is equal to \begin{eqnarray*} \left\La f_2, P^{(0,0)}(f_1,g) \right\Ra
& = & \left\La f_2, \;\sum_{I \in \mathcal{D}} \La f_1, h_I \Ra \La g, h_I \Ra h_I^2 \right\Ra \\
& = & \sum_{I \in \mathcal{D}} \La f_1, h_I \Ra \La g,h_I \Ra \La f_2, h_I^2 \Ra \\
& = & \sum_{I \in \mathcal{D}} \La f_1, h_I \Ra \La f_2 \Ra_I \La g, h_I \Ra .
\end{eqnarray*}

\noindent
This shows that $\pi_{f_1}^* = P^{(0,0)}(f_1, \cdot) = P^{(0,0)}(\cdot,f_1)$.\\

\noindent
The ordinary multiplication operator $M_b: f \rightarrow bf$ can therefore be given by:
$$M_b(f) = bf = P^{(0,0)}(b,f) + P^{(0,1)}(b,f) + P^{(1,0)}(b,f) = \pi_b^*(f) + \pi_b(f) + \pi_f(b).$$

\noindent
The function $b$ is required to be in $L^\infty$ for the boundedness of $M_b$ in $L^p$.  However, the paraproduct operator $\pi_b$ is bounded in $L^p$ for every $1 < p < \infty$ if $b \in BMO^d.$ Note that $BMO^d$ properly contains $L^\infty$. Detailed information on the operator $\pi_b$ can be found in \cite{Per} or \cite{Bla}.

\subsection{Commutators of Haar multipliers:}
The commutator of $T_\epsilon$ with a locally integrable function $b$ is defined by
$$ [b, T_\epsilon](f)(x) := T_\epsilon(bf)(x) - M_b (T_\epsilon(f))(x).$$
\noindent
It is well-known that for a bounded sequence $\epsilon$ and $1<p<\infty$, the commutator $[b, T_\epsilon]$ is bounded in $L^p$ for all $p\in (1, \infty)$  if $b \in BMO^d.$\\
These commutators have been studied in \cite{Treil} in non-homogeneous martingale settings.\\

\section{Multilinear dyadic paraproducts}

\subsection{Decomposition of pointwise product $\displaystyle\prod_{j=1}^m f_j$}

 \noindent
In this sub-section we obtain a decomposition of pointwise product $\displaystyle\prod_{j=1}^m f_j$ of $m$ functions that is analogous to the following paraproduct decomposition :
$$ f_1f_2 = P^{(0,0)}(f_1,f_2) + P^{(0,1)}(f_1,f_2) + P^{(1,0)}(f_1,f_2).$$
The decomposition of $\displaystyle\prod_{j=1}^m f_j$ will be the basis for defining \textit{multi-linear paraproducts} and \textit{m-linear Haar multipliers}, and will also be very useful in proving boundedness properties of multilinear commutators.\\

\noindent
We first introduce the following notation:\\
\begin{itemize}
	\item $f(I,0) := \widehat{f}(I) = \La f,h_I \Ra = \displaystyle \int_\R f(x) h_I(x) dx. $
	\item $f(I,1) := \La f \Ra_I = \frac{1}{\abs{I}} \displaystyle \int_I f(x) dx. $\\	
	\item $U_m:= \left\{(\alpha_1, \alpha_2, \ldots,\alpha_m) \in \{0,1\}^m: (\alpha_1, \alpha_2,\ldots,\alpha_m) \neq (1,1, \ldots,1)\right\}.$\\
	
	\item $\sigma(\vec{\alpha}) = \# \{i: \alpha_i =0\}$ for $\vec{\alpha} = (\alpha_1, \ldots,\alpha_m) \in \{0,1\}^m.$\\
	\item $ (\vec{\alpha}, i) = (\alpha_1, \ldots,\alpha_m, i),\; (i,\vec{\alpha}) = (i,\alpha_1, \ldots,\alpha_m)$ for $\vec{\alpha} = (\alpha_1, \ldots,\alpha_m) \in \{0,1\}^m.$\\
	
	\item $P_I^{\vec{\alpha}} (f_1,  \ldots,f_m) = \prod_{j=1}^m f_j(I,\alpha_j)h_I^{\sigma(\vec{\alpha})}$ for $\vec{\alpha} \in U_m$ and $I \in \D.$\\
	\item $P^{\vec{\alpha}}(f_1,  \ldots,f_m) = \displaystyle\sum_{I\in\D} P_I^{\vec{\alpha}} (f_1, \ldots,f_m) = \displaystyle\sum_{I\in\D}\prod_{j=1}^m f_j(I,\alpha_j)h_I^{\sigma(\vec{\alpha})}$ for $\vec{\alpha} \in U_m.$
	\end{itemize}
	
	\noindent
	With this notation, the paraproduct decomposition of $f_1f_2$ takes the following form:
	$$ f_1f_2 = P^{(0,0)}(f_1,f_2) + P^{(0,1)}(f_1,f_2) + P^{(1,0)}(f_1,f_2) = \sum_{\vec{\alpha} \in U_2} P^{\vec{\alpha}}(f_1,f_2).\\ $$ 
	Note that \begin{equation} \label{IndexSetUm} U_m = \{(\alpha,1): \vec{\alpha} \in U_{m-1}\} \cup \{(\vec{\alpha},0): \vec{\alpha} \in U_{m-1}\} \cup \{(1,\ldots,1,0)\}.
\end{equation}
	\noindent
	To obtain an analogous decomposition of $\displaystyle\prod_{j=1}^m f_j,$ we need the following crucial lemma:
	
	\begin{lm}
	Given $m\geq 2$ and functions $f_1,f_2, \ldots, f_m,$ with $f_i \in L^{p_i}, 1<p_i<\infty,$we have
	$$\prod_{j=1}^{m} \La f_j \Ra_J \mathsf{1}_J = \sum_{\vec{\alpha} \in U_m} \sum_{J\subsetneq I}P_I^{\vec{\alpha}} (f_1,f_2, \ldots, f_m) \; \mathsf{1}_J, $$
	for all $J\in\D.$
	\end{lm}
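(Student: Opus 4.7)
The plan is to fix $x \in J$ and telescope $\prod_{j=1}^m \La f_j \Ra_I$ along the tower of dyadic ancestors of $J$. Let $I_0 = J \subsetneq I_1 \subsetneq I_2 \subsetneq \cdots$ enumerate these ancestors. Since $f_j \in L^{p_j}$ with $p_j < \infty$, H\"older's inequality yields $\abs{\La f_j \Ra_{I_k}} \leq \abs{I_k}^{-1/p_j}\norm{f_j}_{p_j} \to 0$ as $k \to \infty$; hence $\prod_j \La f_j \Ra_{I_k} \to 0$, giving the convergent telescoping
$$\prod_{j=1}^m \La f_j \Ra_J \;=\; \sum_{k=0}^\infty \Bigl[\prod_{j=1}^m \La f_j \Ra_{I_k} - \prod_{j=1}^m \La f_j \Ra_{I_{k+1}}\Bigr].$$

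To expand each summand I would use the elementary identity
$$\prod_{j=1}^m a_j - \prod_{j=1}^m b_j = \sum_{\emptyset \neq S \subseteq \{1,\ldots,m\}} \prod_{j \in S} (a_j - b_j) \prod_{j \notin S} b_j,$$
(obtained by expanding $\prod_j(b_j + (a_j - b_j))$), applied with $a_j = \La f_j \Ra_{I_k}$ and $b_j = \La f_j \Ra_{I_{k+1}}$. Combining this with the one-step martingale identity $\La f_j \Ra_{I_k} - \La f_j \Ra_{I_{k+1}} = \La f_j, h_{I_{k+1}}\Ra\, h_{I_{k+1}}(J)$, each summand becomes a product of $\abs{S}$ Haar coefficients $\La f_j, h_{I_{k+1}}\Ra$, together with $m-\abs{S}$ averages $\La f_j \Ra_{I_{k+1}}$, times the scalar factor $h_{I_{k+1}}(J)^{\abs{S}}$. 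Since $h_{I_{k+1}}$ is constant on $J \subsetneq I_{k+1}$, this factor equals $h_{I_{k+1}}(x)^{\abs{S}}$ pointwise on $J$.

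Finally, the correspondence $\alpha_j = 0 \Leftrightarrow j \in S$ gives a bijection between nonempty subsets $S \subseteq \{1,\ldots,m\}$ and $\vec{\alpha} \in U_m$ satisfying $\sigma(\vec{\alpha}) = \abs{S}$. Under it, the summand above is precisely $P_{I_{k+1}}^{\vec{\alpha}}(f_1,\ldots,f_m)(x)$, and the outer index $k \mapsto I = I_{k+1}$ runs through all dyadic intervals $I \supsetneq J$. Multiplying both sides by $\mathsf{1}_J(x)$ delivers the stated identity. The main obstacle is largely notational---lining up the purely algebraic expansion with the $P_I^{\vec{\alpha}}$ formalism---since the only analytic ingredient is the decay $\La f_j \Ra_{I_k} \to 0$ that powers the telescoping.
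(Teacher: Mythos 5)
Your argument is correct, and it reaches the identity by a genuinely different route than the paper. The paper proceeds by induction on $m$: it first proves the case $m=2$ by multiplying the two telescoping expansions $\La f_i \Ra_J \mathsf{1}_J = \sum_{J\subsetneq I}\widehat{f_i}(I)h_I\,\mathsf{1}_J$ and sorting the resulting double sum according to whether $K\supsetneq I$, $K=I$, or $K\subsetneq I$; the inductive step multiplies the $(m-1)$-fold decomposition by one more such expansion and uses the recursive description of the index set $U_m$ in terms of $U_{m-1}$. You instead telescope the full $m$-fold product $\prod_j\La f_j\Ra_{I_k}$ along the ancestor chain of $J$ in one pass, and expand each consecutive difference with the inclusion--exclusion identity $\prod_j a_j-\prod_j b_j=\sum_{\emptyset\neq S}\prod_{j\in S}(a_j-b_j)\prod_{j\notin S}b_j$; the bijection $S\leftrightarrow\{j:\alpha_j=0\}$ then produces all of $U_m$ at once, with $\sigma(\vec\alpha)=\abs{S}$ accounting for the power $h_I^{\sigma(\vec\alpha)}$. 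Both proofs rest on the same two ingredients --- the one-step martingale identity $\La f\Ra_{I_k}-\La f\Ra_{I_{k+1}}=\La f,h_{I_{k+1}}\Ra h_{I_{k+1}}(x)$ and the decay $\La f_j\Ra_{I_k}\to 0$ coming from $f_j\in L^{p_j}$, $p_j<\infty$ (the paper uses the latter implicitly when it writes $\La f\Ra_J\mathsf{1}_J$ as a sum over ancestors) --- but yours is non-inductive and makes the combinatorics of $U_m$ transparent, at the cost of invoking the subset expansion, whereas the paper only ever needs the two-factor case of that algebra. Like the paper, you rearrange the resulting double series into the order $\sum_{\vec\alpha}\sum_{I\supsetneq J}$ without comment; this is the same level of rigor as the original, so it is not a gap relative to the paper's own proof.
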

	
	\noindent
	\begin{proof} We prove the lemma by induction on $m.$\\
	
	\noindent
	First assume that $m=2.$ We want to prove the following:
	\begin{eqnarray} \label{AverageProduct}
\La f_1 \Ra_J \La f_2\Ra_J\mathsf{1}_J&=& \sum_{\vec{\alpha} \in U_2} \sum_{J\subsetneq I}P^{\vec{\alpha}}_I(f_1,f_2) \; \mathsf{1}_J\\
&=& \nonumber\left(\sum_{J\subsetneq I}P^{(0,1)}_I(f_1,f_2)+\sum_{J\subsetneq I}P^{(1,0)}_I(f_1,f_2)+\sum_{J\subsetneq I}P^{(0,0)}_I(f_1,f_2)\right) \; \mathsf{1}_J\\
&=& \nonumber \left(\sum_{J\subsetneq I}\widehat{f_1}(I)\La f_2 \Ra_I h_I+\sum_{J\subsetneq I}\La f_1 \Ra_I \widehat{f_2}(I)h_I+\sum_{J\subsetneq I}\widehat{f_1}(I)\widehat{f_2}(I) h_I^2\right) \; \mathsf{1}_J.
\end{eqnarray}
\noindent
We have,
\begin{eqnarray*}
&&\La f_1 \Ra_J \left< f_2 \right>_J \mathsf{1}_J\\
&=&  \left( \sum_{J\subsetneq I}\widehat{f_1}(I) h_I\right) \left( \sum_{J\subsetneq K}\widehat{f_2}(K) h_K\right)\mathsf{1}_J\\
&=& \sum_{J\subsetneq I}\widehat{f_1}(I) h_I \left( \sum_{I\subsetneq K}\widehat{f_2}(K) h_K + \widehat{f_2}(I) h_I+\sum_{J\subsetneq K \subsetneq I}\widehat{f_2}(K) h_K\right)\mathsf{1}_J\\
&=& \left\{\sum_{J\subsetneq I}\widehat{f_1}(I) \left< f_2\right>_I h_I + \sum_{J\subsetneq I}\widehat{f_1}(I) \widehat{f_2}(I) h_I^2 + \sum_{J\subsetneq I}\widehat{f_1}(I) h_I \left(\sum_{J\subsetneq K \subsetneq I}\widehat{f_2}(K) h_K\right)\right\}\mathsf{1}_J\\
&=& \left\{\sum_{J\subsetneq I}\widehat{f_1}(I) \left< f_2\right>_I h_I + \sum_{J\subsetneq I}\widehat{f_1}(I) \widehat{f_2}(I) h_I^2 + \sum_{J\subsetneq K}\widehat{f_2}(K) h_K \left(\sum_{K \subsetneq I}\widehat{f_1}(I) h_I\right)\right\}\mathsf{1}_J\\
&=& \left\{\sum_{J\subsetneq I}\widehat{f_1}(I) \left< f_2\right>_I h_I + \sum_{J\subsetneq I}\widehat{f_1}(I) \widehat{f_2}(I) h_I^2 + \sum_{J\subsetneq K}\widehat{f_2}(K) \left<f_1\right>_K h_K \right\}\mathsf{1}_J\\
&=& \left\{\sum_{J\subsetneq I}\widehat{f_1}(I) \left< f_2\right>_I h_I + \sum_{J\subsetneq I}\widehat{f_1}(I) \widehat{f_2}(I) h_I^2 + \sum_{J\subsetneq I}\widehat{f_2}(I)\left<f_1\right>_Ih_I \right\}\mathsf{1}_J\\
&=& \left(\sum_{J\subsetneq I}\widehat{f_1}(I)\La f_2 \Ra_I h_I+\sum_{J\subsetneq I}\La f_1 \Ra_I \widehat{f_2}(I)h_I+\sum_{J\subsetneq I}\widehat{f_1}(I)\widehat{f_2}(I) h_I^2\right) \; \mathsf{1}_J.\\
\end{eqnarray*}

\noindent
Now assume  $m > 2$ and that
 $$\prod_{j=1}^{m-1} \La f_j \Ra_J \mathsf{1}_J = \sum_{\vec{\alpha} \in U_{m-1}} \sum_{J\subsetneq I}P_I^{\vec{\alpha}} (f_1,f_2, \ldots, f_{m-1})  \mathsf{1}_J. $$

\noindent
Then, 
\begin{eqnarray*}
&&\prod_{j=1}^{m} \La f_j \Ra_J \mathsf{1}_J\\
&=& \left(\prod_{j=1}^{m-1} \La f_j \Ra_J \mathsf{1}_J \right) \La f_m\Ra_J\mathsf{1}_J\\
&=& \sum_{\vec{\alpha} \in U_{m-1}} \sum_{J\subsetneq I}P_I^{\vec{\alpha}} (f_1,f_2, \ldots, f_{m-1}) \left( \sum_{J\subsetneq K}\widehat{f_m}(K) h_K\right)  \mathsf{1}_J\\
&=& \sum_{\vec{\alpha} \in U_{m-1}} \sum_{J\subsetneq I}P_I^{\vec{\alpha}} (f_1,f_2, \ldots, f_{m-1})\left( \sum_{I\subsetneq K}\widehat{f_m}(K) h_K + \widehat{f_m}(I) h_I+\sum_{J\subsetneq K \subsetneq I}\widehat{f_m}(K) h_K\right)\mathsf{1}_J\\
\end{eqnarray*}

\noindent
This gives
\begin{eqnarray*}
&&\prod_{j=1}^{m} \La f_j \Ra_J \mathsf{1}_J\\
&=& \sum_{\vec{\alpha} \in U_{m-1}} \sum_{J\subsetneq I}P_I^{\vec{\alpha}} (f_1,f_2, \ldots, f_{m-1})\La f_m \Ra_I \mathsf{1}_J + \sum_{\vec{\alpha} \in U_{m-1}} \sum_{J\subsetneq I}P_I^{\vec{\alpha}} (f_1,f_2, \ldots, f_{m-1}) \widehat{f_m}(I) h_I \mathsf{1}_J\\
 && \quad +\sum_{\vec{\alpha} \in U_{m-1}} \sum_{J\subsetneq I}P_I^{\vec{\alpha}} (f_1,f_2, \ldots, f_{m-1})\left( \sum_{J\subsetneq K \subsetneq I}\widehat{f_m}(K) h_K\right)\mathsf{1}_J\\
&=& \sum_{\vec{\alpha} \in U_{m-1}} \sum_{J\subsetneq I}P_I^{(\vec{\alpha},1)} (f_1,f_2, \ldots, f_m) \mathsf{1}_J + \sum_{\vec{\alpha} \in U_{m-1}} \sum_{J\subsetneq I}P_I^{(\vec{\alpha},0)} (f_1,f_2, \ldots, f_m) \mathsf{1}_J\\
&& \quad+  \sum_{J\subsetneq K}\widehat{f_2}(K) h_K \left(\sum_{\vec{\alpha} \in U_{m-1}}\sum_{K\subsetneq I}P_I^{\vec{\alpha}} (f_1,f_2, \ldots, f_{m-1})\right) \mathsf{1}_J\\
&=& \sum_{\vec{\alpha} \in U_{m-1}} \sum_{J\subsetneq I}P_I^{(\vec{\alpha},1)} (f_1,f_2, \ldots, f_m) \mathsf{1}_J + \sum_{\vec{\alpha} \in U_{m-1}} \sum_{J\subsetneq I}P_I^{(\vec{\alpha},0)} (f_1,f_2, \ldots, f_m) \mathsf{1}_J\\
 && \quad+  \sum_{J\subsetneq K}\widehat{f_m}(K) h_K \La f_1\Ra_K \ldots\La f_{m-1}\Ra_K \mathsf{1}_J\\
&=&\sum_{\vec{\alpha} \in U_{m-1}} \sum_{J\subsetneq I}P_I^{(\vec{\alpha},1)} (f_1,f_2, \ldots, f_m) \mathsf{1}_J + \sum_{\vec{\alpha} \in U_{m-1}} \sum_{J\subsetneq I}P_I^{(\vec{\alpha},0)} (f_1,f_2, \ldots, f_m) \mathsf{1}_J\\
&& \quad + \sum_{J\subsetneq I} P_I^{(1,\ldots,1,0)}(f_1,f_2,\ldots,f_m) \mathsf{1}_J\\
&=& \sum_{\vec{\alpha} \in U_m} \sum_{J\subsetneq I}P_I^{\vec{\alpha}} (f_1,f_2, \ldots, f_m) \mathsf{1}_J.
\end{eqnarray*}
The last equality follows from (\ref{IndexSetUm}).
\end{proof}

	\begin{lm}
	Given $m\geq 2$ and functions $f_1,f_2, \ldots, f_m,$ with $f_i \in L^{p_i}, 1<p_i<\infty,$we have
	$$\displaystyle\prod_{j=1}^m f_j = \sum_{\vec{\alpha} \in U_m} P^{\vec{\alpha}}(f_1,f_2, \ldots, f_m). $$
	\end{lm}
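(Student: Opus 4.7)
Deduce the decomposition from the preceding lemma by shrinking $J$ to a single point $x$ along a decreasing dyadic chain and passing to the limit, using the Lebesgue differentiation theorem on the left-hand side and the support properties of the summands $P_I^{\vec{\alpha}}$ on the right.

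\noindent\textbf{Execution.} Fix $x\in\R$ which is a simultaneous Lebesgue point of $f_1,\ldots,f_m$; this excludes only a null set. For each $n\in\Z$ let $J_n=J_n(x)\in\D$ denote the unique dyadic interval of length $2^{-n}$ containing $x$, so that $J_{n+1}\subset J_n$ and $|J_n|\to 0$ as $n\to\infty$. Since $x\in J_n$, evaluating the previous lemma at $x$ (with $\mathsf{1}_{J_n}(x)=1$) reads
\begin{equation*}
\prod_{j=1}^{m}\La f_j\Ra_{J_n}\;=\;\sum_{\vec{\alpha}\in U_m}\,\sum_{I\supsetneq J_n}P_I^{\vec{\alpha}}(f_1,\ldots,f_m)(x).
\end{equation*}
The left-hand side converges to $\prod_{j=1}^{m}f_j(x)$ by the Lebesgue differentiation theorem and continuity of multiplication. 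On the right, for each $\vec{\alpha}\in U_m$ one has $\sigma(\vec{\alpha})\geq 1$, so $P_I^{\vec{\alpha}}$ carries the factor $h_I^{\sigma(\vec{\alpha})}$ and is supported in $I$; hence only intervals $I$ containing $x$ contribute. Such $I$ are exactly the $J_k=J_k(x)$ for $k\in\Z$, and $I\supsetneq J_n$ reduces to $k<n$. Letting $n\to\infty$ exhausts $\Z$, so the inner sums tend to $\sum_{I\in\D}P_I^{\vec{\alpha}}(x)=P^{\vec{\alpha}}(f_1,\ldots,f_m)(x)$. Since $U_m$ is finite, the outer sum commutes with the limit, giving the identity pointwise a.e.

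\noindent\textbf{Main obstacle.} The delicate step is justifying that the inner series $\sum_{I\in\D}P_I^{\vec{\alpha}}(x)$ converges for each individual $\vec{\alpha}\in U_m$, since the previous lemma only asserts convergence of the $|U_m|$-fold combined sum at each fixed $n$. This is the same martingale/Haar-expansion convergence issue that underlies the previous lemma and is handled by the hypothesis $f_j\in L^{p_j}$; alternatively, once boundedness of each $P^{\vec{\alpha}}$ is established in the next section, the identity admits a clean $L^r$ interpretation that circumvents pointwise convergence concerns. With this in hand, the combination of the previous lemma and the Lebesgue differentiation theorem yields the desired decomposition.
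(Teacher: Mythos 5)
Your argument is correct in outline but takes a genuinely different route from the paper. The paper proves this identity by induction on $m$: it expands $f_m=\sum_J\widehat{f_m}(J)h_J$, splits the resulting double sum according to whether $I\subsetneq J$, $J=I$ or $J\subsetneq I$, and uses the preceding lemma to resum the last piece into $P^{(1,\ldots,1,0)}(f_1,\ldots,f_m)$ --- a purely formal rearrangement of series that mirrors, step for step, the proof of the preceding lemma. You instead use the preceding lemma as a black box and let $J$ shrink to a point: dyadic Lebesgue differentiation handles the left-hand side, and the support of $h_I^{\sigma(\vec{\alpha})}$ (note $\sigma(\vec{\alpha})\geq 1$ on $U_m$) reduces the right-hand side to the chain of dyadic intervals containing $x$. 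This is shorter and makes the content of the lemma transparent --- it literally is the $\abs{J}\to 0$ limit of the preceding one --- but it trades algebraic bookkeeping for an analytic limit interchange, and the obstacle you flag is the real cost: the preceding lemma only gives convergence of the combined truncated sum $\sum_{\vec{\alpha}}\sum_{I\supsetneq J_n(x)}P_I^{\vec{\alpha}}(x)$ for each $n$, whereas you need each individual truncation to converge a.e.\ to $P^{\vec{\alpha}}(f_1,\ldots,f_m)(x)$ as $n\to\infty$. This does hold: for $\sigma(\vec{\alpha})\geq 2$ the series defining $P^{\vec{\alpha}}$ is absolutely convergent a.e.\ by the Cauchy--Schwarz/square-function domination appearing in the proof of Lemma 3.5, and for $\sigma(\vec{\alpha})=1$ the truncations $\sum_{\abs{I}>2^{-n}}c_Ih_I$ form a dyadic martingale whose square function is dominated a.e.\ by $\bigl(\prod_{j\neq j_0}Mf_j\bigr)Sf_{j_0}<\infty$, hence converge a.e. So the gap is fillable and should be filled rather than deferred to a later ``$L^r$ interpretation''; with that point made explicit, your derivation is a legitimate alternative, and it is no less rigorous than the paper's own formal manipulation of the same infinite sums.
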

	
\noindent
\begin{proof} We have already seen that it is true for $m=2.$ By induction, assume that
\begin{eqnarray*}
\displaystyle\prod_{j=1}^{m-1} f_j &=& \sum_{\vec{\alpha} \in U_{m-1}} P^{\vec{\alpha}}(f_1,f_2, \ldots, f_{m-1})\\
&= & \sum_{\vec{\alpha} \in U_{m-1}} \sum_{I\in\D} P_I^{\vec{\alpha}}(f_1,f_2, \ldots, f_{m-1})
\end{eqnarray*}

\noindent
Then,
\begin{eqnarray*}
\displaystyle\prod_{j=1}^m f_j &=& \left(\displaystyle\prod_{j=1}^{m-1} f_j \right) f_m\\
&=& \sum_{\vec{\alpha} \in U_{m-1}} \sum_{I\in\D} P_I^{\vec{\alpha}}(f_1,f_2, \ldots, f_{m-1}) \left(\sum_{J\in\D}\widehat{f_m}(J) h_J \right)\\
&=& \sum_{\vec{\alpha} \in U_{m-1}} \sum_{I\in\D} P_I^{\vec{\alpha}}(f_1,f_2, \ldots, f_{m-1}) \left( \sum_{I\subsetneq J}\widehat{f_m}(J) h_J + \widehat{f_m}(I) h_I+\sum_{J\subsetneq I }\widehat{f_m}(J) h_J\right)\\
&=& \sum_{\vec{\alpha} \in U_{m-1}} \sum_{I\in\D} P_I^{\vec{\alpha}}(f_1,f_2, \ldots, f_{m-1}) \La f_m \Ra_I + \sum_{\vec{\alpha} \in U_{m-1}} \sum_{I\in\D} P_I^{\vec{\alpha}}(f_1,f_2, \ldots, f_{m-1}) \widehat{f_m}(I) h_I\\
&& \quad + \sum_{\vec{\alpha} \in U_{m-1}} \sum_{I\in\D} P_I^{\vec{\alpha}}(f_1,f_2, \ldots, f_{m-1})\left(\sum_{J\subsetneq I }\widehat{f_m}(J) h_J \right)\\
&=& \sum_{\vec{\alpha} \in U_{m-1}} \sum_{I\in\D} P_I^{(\vec{\alpha},1)}(f_1,f_2, \ldots, f_m) + \sum_{\vec{\alpha} \in U_{m-1}} \sum_{I\in\D} P_I^{(\vec{\alpha},0)}(f_1,f_2, \ldots, f_m)\\
&& \quad + \sum_J\widehat{f_m}(J) h_J  \left(\sum_{\vec{\alpha} \in U_{m-1}} \sum_{J\subsetneq I} P_I^{\vec{\alpha}}(f_1,f_2, \ldots, f_{m-1})\right)\\
&=& \sum_{\vec{\alpha} \in U_{m-1}} \sum_{I\in\D} P_I^{(\vec{\alpha},1)}(f_1,f_2, \ldots, f_m) + \sum_{\vec{\alpha} \in U_{m-1}} \sum_{I\in\D} P_I^{(\vec{\alpha},0)}(f_1,f_2, \ldots, f_m)\\
&& \quad + \sum_J\widehat{f_m}(J) h_J \La f_1 \Ra_J \ldots\La f_{m-1} \Ra_J\\
&=& \sum_{\vec{\alpha} \in U_{m-1}} \sum_{I\in\D} P_I^{(\vec{\alpha},1)}(f_1,f_2, \ldots, f_m) + \sum_{\vec{\alpha} \in U_{m-1}} \sum_{I\in\D} P_I^{(\vec{\alpha},0)}(f_1,f_2, \ldots, f_m)\\
&& \quad + P^{(1,\ldots,1,0)}(f_1,f_2, \ldots, f_m)\\
&=& \sum_{\vec{\alpha} \in U_m} P^{\vec{\alpha}}(f_1,f_2, \ldots, f_m).
\end{eqnarray*}
Here the last equality follows from $(\ref{IndexSetUm})$.
\end{proof}

\subsection{Multilinear dyadic paraproducts}

\noindent
On the basis of the decomposition of pointwise product $\prod_{j=1}^m f_j$ we now define multi-linear dyadic paraproduct operators, and study their boundedness properties.\\
\begin{dfn} 
For $m \geq 2$ and $\vec{\alpha} = (\alpha_1, \alpha_2, \ldots, \alpha_m) \in \{0,1\}^m$, we define \textit{multi-linear dyadic paraproduct operators} by 
$$ P^{\vec{\alpha}}(f_1,f_2,\ldots,f_m) = \sum_{I\in\mathcal{D}} \prod_{j=1}^m f_j(I,\alpha_j) h_I^{\sigma(\vec{\alpha})} $$
where $f_i(I,0) = \langle f_i, h_I \rangle$, $f_i(I,1) = \langle f_i \rangle_I$ and $\sigma(\vec{\alpha}) = \#\{i: \alpha_i = 0\}.$\\
\end{dfn}
\noindent
Observe that if $\vec{\beta} = (\beta_1, \beta_2, \ldots,\beta_m)$ is some permutation of $\vec{\alpha} = (\alpha_1, \alpha_2, \ldots, \alpha_m)$ and $(g_1, g_2, \ldots, g_m)$ is the corresponding permutation of $(f_1, f_2, \ldots, f_m)$, then
$$P^{\vec{\alpha}} (f_1, f_2, \ldots, f_m) = P^{\vec{\beta}} (g_1, g_2, \ldots, g_m).$$
\noindent
Also note that $P^{(1,0)}$ and $P^{(0,1)}$ are the standard bilinear paraproduct operators:
$$ P^{(0,1)}(f_1,f_2) = \sum_{I\in\mathcal{D}} \langle f_1, h_I \rangle \langle f_2 \rangle_I h_I = P(f_1,f_2)$$
$$ P^{(1,0)}(f_1,f_2) = \sum_{I\in\mathcal{D}} \langle f_1 \rangle_I \langle f_2, h_I \rangle h_I = P(f_1,f_2).$$

\noindent
In terms of paraproducts, the decomposition of point-wise product $\displaystyle\prod_{j=1}^m f_j$ we  obtained in the previous section takes the form
$$\displaystyle\prod_{j=1}^m f_j = \displaystyle \sum_{\substack {\vec{\alpha} \in \{0,1\}^m\\ \vec{\alpha} \neq (1,1,\ldots,1)}} P^{\vec{\alpha}}(f_1,f_2,\ldots,f_m).$$

\noindent 
\begin{dfn}
For a given function $b$ and $\vec{\alpha} = (\alpha_1, \alpha_2, \ldots, \alpha_m) \in \{0,1\}^m$, we define the paraproduct operators $\pi_b^{\vec{\alpha}}$ by
$$\pi_b^{\vec{\alpha}}(f_1, f_2, \ldots, f_m) = P^{(0,\vec{\alpha})}(b,f_1, f_2, \ldots, f_m) = \sum_{I \in \mathcal{D}} \La b , h_I \Ra \prod_{j=1}^m f_j(I, \alpha_j) \; h_I^{1+\sigma(\vec{\alpha})}$$
where $(0,\vec{\alpha}) = (0,\alpha_1,\ldots, \alpha_m) \in \{0,1\}^{m+1}.$\\
\end{dfn}
\noindent Note that $$\pi_b^1(f) = P^{(0,1)}(b,f) = \sum_{I \in \mathcal{D}} b(I,0) f(I,1) h_I = \sum_{I \in \mathcal{D}} \La b, h_I \Ra \La f \Ra_I h_I = \pi_b(f).$$

\noindent
The rest of this section is devoted to the boundedness properties of these multilinear paraproduct operators $P^{\vec{\alpha}}$ and $\pi_b^{\vec{\alpha}}.$\\

\noindent
\begin{lm}\label{MPPTh1} Let $1 <p_1,p_2,\ldots,p_m, r < \infty$ and \,$\sum_{j=1}^m \frac{1}{p_j} = \frac{1}{r}$. Then for $\vec{\alpha} = (\alpha_1, \alpha_2, \ldots, \alpha_m) \in U_m$, the operators $P^{\vec{\alpha}}$ map $L^{p_1} \times \cdots\times L^{p_m} \rightarrow L^{r}$ with estimates of the form:
$$\norm{P^{\vec{\alpha}}(f_1,f_2,\ldots,f_m)}_r \lesssim \prod_{j=1}^m\norm{f_j}_{p_j}$$
\end{lm}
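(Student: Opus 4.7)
Fix $\vec{\alpha} \in U_m$, set $\sigma := \sigma(\vec{\alpha}) \geq 1$, and pick some index $j^\ast$ with $\alpha_{j^\ast} = 0$. The argument naturally splits on whether $\sigma = 1$ or $\sigma \geq 2$, because only when $\sigma = 1$ does $h_I^{\sigma(\vec{\alpha})}$ remain an element of the Haar basis.

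\textit{Case $\sigma = 1$.} Here $P^{\vec{\alpha}} = \sum_{I\in\D} d_I h_I$ with Haar coefficient $d_I = \widehat{f_{j^\ast}}(I)\prod_{j\neq j^\ast}\La f_j\Ra_I$. Using the square-function characterization $\norm{\cdot}_r \approx \norm{S\cdot}_r$ for $1 < r < \infty$ and the pointwise bound $|\La f_j\Ra_I|\mathsf{1}_I(x) \leq Mf_j(x)$, I obtain
\begin{equation*}
S(P^{\vec{\alpha}})(x)^2 \;=\; \sum_{I\in\D} \frac{|\widehat{f_{j^\ast}}(I)|^2}{|I|}\prod_{j\neq j^\ast}|\La f_j\Ra_I|^2\,\mathsf{1}_I(x) \;\leq\; Sf_{j^\ast}(x)^2 \prod_{j\neq j^\ast} Mf_j(x)^2.
\end{equation*}
Multilinear Hölder in $L^r$, combined with the $L^{p_j}$-boundedness of $M$ and $S$ (recall $p_j > 1$), then yields $\norm{P^{\vec{\alpha}}}_r \lesssim \prod_j \norm{f_j}_{p_j}$.

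\textit{Case $\sigma \geq 2$.} Since $|h_I^{\sigma}(x)| = |I|^{-\sigma/2}\mathsf{1}_I(x)$, the pointwise bound
\begin{equation*}
|P^{\vec{\alpha}}(f_1,\ldots,f_m)(x)| \;\leq\; \sum_{I\ni x}\prod_{j:\alpha_j=1}|\La f_j\Ra_I| \prod_{j:\alpha_j=0}\frac{|\widehat{f_j}(I)|}{|I|^{1/2}}
\end{equation*}
holds. Bounding each average by $Mf_j(x)$, the remaining $I$-sum is estimated by the generalized Hölder inequality with $\sigma$ equal exponents $\sigma$:
\begin{equation*}
\sum_{I\ni x} \prod_{j:\alpha_j=0}\frac{|\widehat{f_j}(I)|}{|I|^{1/2}} \;\leq\; \prod_{j:\alpha_j=0}\left(\sum_{I\ni x}\frac{|\widehat{f_j}(I)|^{\sigma}}{|I|^{\sigma/2}}\right)^{1/\sigma}.
\end{equation*}
Combined with the pointwise bound $|\widehat{f_j}(I)|/|I|^{1/2} \leq Mf_j(x)$ for $x \in I$, each inner sum is controlled by $Mf_j(x)^{\sigma-2}Sf_j(x)^2$, which gives
\begin{equation*}
|P^{\vec{\alpha}}(x)| \;\lesssim\; \prod_{j:\alpha_j=1}Mf_j(x) \prod_{j:\alpha_j=0} Mf_j(x)^{(\sigma-2)/\sigma}Sf_j(x)^{2/\sigma}.
\end{equation*}
A final multilinear Hölder in $L^r$, together with $\norm{(Mf_j)^a(Sf_j)^b}_{p_j} \leq \norm{Mf_j}_{p_j}^a\norm{Sf_j}_{p_j}^b$ for $a+b=1$ and the $L^{p_j}$-boundedness of $M,S$, delivers the claim.

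The main obstacle is Case $\sigma \geq 2$: because $h_I^\sigma$ is no longer mean-zero, the square-function characterization cannot be applied directly to $P^{\vec{\alpha}}$. One has to redistribute the $\sigma$ Haar coefficients across $\sigma$ copies of the square function via Hölder in the interval sum and then interpolate pointwise against the maximal function in order to recover a usable bound.
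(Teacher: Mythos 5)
Your argument is correct and follows essentially the same route as the paper: pointwise domination of the paraproduct by products of maximal and square functions using $|\langle f\rangle_I|\mathsf{1}_I(x)\le Mf(x)$ and $|\widehat f(I)|/\sqrt{|I|}\,\mathsf{1}_I(x)\le Mf(x)$, followed by multilinear H\"older and the $L^{p_j}$-boundedness of $M$ and $S$. The only (valid) deviation is in the case $\sigma\ge 2$, where the paper singles out two indices with $\alpha_{j'}=\alpha_{j''}=0$ and applies Cauchy--Schwarz to get $Sf_{j'}Sf_{j''}$ while absorbing the remaining zero-slots into maximal functions, whereas you distribute the square function symmetrically over all $\sigma$ zero-slots via a $\sigma$-fold H\"older, obtaining the mixed bound $(Mf_j)^{(\sigma-2)/\sigma}(Sf_j)^{2/\sigma}$ --- both pointwise bounds feed into the identical concluding H\"older step.
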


\noindent
\begin{proof} First we observe that, if $x\in I \in \D,$ then $$\abs{\La f \Ra_I} \leq \La \abs{f}\Ra_I  \leq Mf(x)$$ and that
\begin{eqnarray*}
\frac{\left\vert \La f , h_I \Ra \right\vert}{\sqrt{\abs{I}}} & = & \frac{1}{\sqrt{\abs{I}}} \left\vert \int_\R f h_I  \right\vert\\
& = & \frac{1}{\abs{I}} \left\vert \int_\R f \mathsf{1}_{I_+}  - \int_\R f \mathsf{1}_{I_-}  \right\vert\\
& = & \frac{1}{\abs{I}} \left(\int_{I_+} \abs{f}   + \int_{I_-} \abs{f}   \right)\\
& \leq & \frac{1}{\abs{I}}  \int_{I} \abs{f}   \\
& \leq & Mf(x).
\end{eqnarray*}

\noindent
\textbf{Case I:} $\sigma(\vec{\alpha}) = 1.$\\
Let $\alpha_{j_0} = 0.$ Then
\begin{eqnarray*}
\displaystyle P^{\vec{\alpha}}(f_1,f_2,\ldots,f_m) &=& \sum_{I\in\mathcal{D}} \prod_{j=1}^m f_j(I,\alpha_j) h_I^{\sigma(\vec{\alpha})}\\
&=& \sum_{I\in\mathcal{D}} \left(\prod_{\substack{j = 1\\j\neq j_0}}^m \La f_j \Ra_I\right) \La f_{j_0}, h_I\Ra h_I.
\end{eqnarray*}
\noindent
Using square function estimates, we obtain
\begin{eqnarray*}
\left\Vert P^{\vec{\alpha}}(f_1, f_2, \ldots, f_m) \right\Vert_r &\lesssim& \left\Vert\left(\sum_{I\in \D} \prod_{\substack{j = 1\\j\neq j_0}}^m \left\vert\La f_j\Ra_I \right\vert^2 \abs{\La f_{j_0}, h_I \Ra}^2 \frac{\mathsf{1}_I}{\abs{I}}\right)^{1/2}\right\Vert_r\\
&\leq& \left\Vert\left(\prod_{\substack{j = 1\\j\neq j_0}}^m Mf_j \right) \left(\sum_{I\in \D}  \abs{\La f_{j_0}, h_I \Ra}^2 \frac{\mathsf{1}_I}{\abs{I}}\right)^{1/2}\right\Vert_r\\
&= & \left\Vert\left(\prod_{\substack{j = 1 \\j\neq j_0}}^m Mf_j \right) (Sf_{j_0}) \right\Vert_r\\
&\leq& \prod_{\substack{j = 1\\j\neq j_0}}^m \norm{Mf_j}_{p_j} \norm{Sf_{j_0}}_{j_0}\\
&\lesssim& \prod_{j=1}^m \norm{f_j}_{p_j},
\end{eqnarray*}
where we have used H$\ddot{\text{o}}$lder inequality, and the boundedness of maximal and square function operators  to obtain the last two inequalities.\\

\noindent
\textbf{Case II:} $\sigma(\vec{\alpha}) > 1.$\\
Choose $j'$ and $j''$ such that $\alpha_{j'} = \alpha_{j''} = 0.$ Then 
\begin{eqnarray*}
\left\vert P^{(0,0,\ldots,0)}(f_1,f_2,\ldots,f_m)(x) \right\vert & = &  \left\vert \sum_{I\in \D}\left(\prod_{j:\alpha_j = 1} \La f_j \Ra_I\right) \left( \prod_{\substack{j:\alpha_j = 0 \\j \neq j', \,j''}} \frac{\La f_j, h_I\Ra}{\sqrt{\abs{I}}} \right) \La f_{j'}, h_I\Ra \La f_{j''}, h_I\Ra \frac{\mathsf{1}_I(x)}{\abs{I}}  \right\vert \\
& \leq & \left(\prod_{ j: j \neq j',\,j''} Mf_j(x) \right) \left( \sum_{I\in \D} \abs{\La f_{j'}, h_I\Ra} \abs{\La f_{j''}, h_I\Ra} \frac{\mathsf{1}_I(x)}{\abs{I}}  \right). 
\end{eqnarray*}
By Cauchy-Schwarz inequality
\begin{eqnarray}
 \nonumber && \sum_{I\in\D} \left\vert\La f_{j'},h_I\Ra \right\vert \, \left\vert\La f_{j''},h_I \Ra\right\vert \frac{\mathsf{1}_I(x)}{\abs{I}}\\ 
\label{eq:sf}&\leq&  \left(\sum_{I\in\D}\abs{\La f_{j'},h_I\Ra }^2 \frac{\mathsf{1}_I(x)}{\abs{I}}\right)^{\frac{1}{2}} \left(\sum_{I\in\D}\abs{\La f_{j''},h_I\Ra }^2 \frac{\mathsf{1}_I(x)}{\abs{I}}\right)^{\frac{1}{2}}\\
 \nonumber &=& Sf_{j'}(x)\, Sf_{j''}(x).
\end{eqnarray}
Therefore, 
\begin{eqnarray*}
\left\vert P^{(0,0,\ldots,0)}(f_1,f_2,\ldots,f_m)(x) \right\vert 
& \leq & \left(\prod_{ j: j \neq j',\,j''} Mf_j(x) \right) Sf_{j'}(x)\, Sf_{j''}(x). 
\end{eqnarray*}
\noindent
Now using generalized H$\ddot{\text{o}}$lder's inequality and the boundedness properties of the maximal and square functions, we get \begin{eqnarray*} \left\Vert P^{(0,0,\ldots,0)}(f_1,f_2,\ldots,f_m) \right\Vert_r
 &\leq& \left(\prod_{ j: j \neq j',\,j''} \norm{Mf_j}_{p_j}\right) \norm{Sf_{j'}}_{p_{j'}}\, \norm{Sf_{j''}}_{p_{j''}}\\
& \lesssim &\prod_{j=1}^m\norm{f_j}_{p_j}.
\end{eqnarray*}

\end{proof}

\noindent
\begin{lm} \label{MPPTh2}
Let $\vec{\alpha} = (\alpha_1, \ldots, \alpha_m) \in \{0,1\}^m$ and $1 <p_1, \ldots,p_m,r < \infty$ with $\sum_{j=1}^m \frac{1}{p_j} = \frac{1}{r}.$
\begin{enumerate}[label = $(\alph*)$]
\item  For $\sigma(\vec{\alpha}) \leq 1,$ $\pi_b^{\vec{\alpha}}$ is a bounded operator from $L^{p_1} \times \cdots \times L^{p_m}$ to $L^{r}$ if and only if \qquad $b \in BMO^d.$\\
\item  For $\sigma(\vec{\alpha}) > 1,$ $\pi_b^{\vec{\alpha}}$ is a bounded operator from $L^{p_1} \times \cdots \times L^{p_m}$ to $L^{r}$ if and only if $\displaystyle\sup_{I\in \D} \frac{\abs{\La b,h_I\Ra}}{\sqrt{\abs{I}}} < \infty.$
\end{enumerate}
\end{lm}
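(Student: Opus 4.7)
The plan is to split (a) into the two cases $\sigma(\vec\alpha)=0$ (i.e., $\vec\alpha=(1,\ldots,1)$) and $\sigma(\vec\alpha)=1$, showing that the second is the dual of the first, and to handle (b) uniformly for all $\sigma(\vec\alpha)\ge 2$. The engines are the square function $S$, the maximal function $M$, the Cauchy--Schwarz trick already used in Lemma~\ref{MPPTh1}, and, for (a), the well-known fact that $b\in BMO^d$ is equivalent to $\{|\langle b,h_I\rangle|^2\}_{I\in\D}$ being a Carleson sequence of constant $\|b\|_{BMO^d}^2$, together with the norm equivalence $BMO^d=BMO^d_r$.

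\textbf{Sufficiency.} For (a) with $\sigma=0$, the output is a genuine Haar series, so I would pass to the square function
\[
S\pi_b^{(1,\ldots,1)}(\vec f)(x)^2=\sum_I|\langle b,h_I\rangle|^2\prod_{j=1}^m\langle f_j\rangle_I^2\,\frac{\mathbf 1_I(x)}{|I|},
\]
bound $\langle|f_j|\rangle_I\le Mf_j(x)$ pointwise for $j=2,\ldots,m$, pull those factors out, recognize what remains as $S(\pi_b f_1)(x)^2$, and finish by the linear estimate $\|\pi_b f_1\|_{p_1}\lesssim\|b\|_{BMO^d}\|f_1\|_{p_1}$ and H\"older. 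For $\sigma=1$, say $\alpha_{j_0}=0$, the identity
\[
\langle\pi_b^{\vec\alpha}(\vec f),g\rangle=\sum_I\langle b,h_I\rangle\langle f_{j_0},h_I\rangle\prod_{j\neq j_0}\langle f_j\rangle_I\langle g\rangle_I=\langle f_{j_0},\pi_b^{(1,\ldots,1)}(f_1,\ldots,\widehat{f_{j_0}},\ldots,f_m,g)\rangle
\]
(with $\sum_{j\neq j_0}1/p_j+1/r'=1/p_{j_0}'$) reduces the bound $L^{p_1}\times\cdots\times L^{p_m}\to L^r$ to the $\sigma=0$ case by duality in the $j_0$-th slot. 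For (b), pick $j',j''$ with $\alpha_{j'}=\alpha_{j''}=0$ (possible since $\sigma\ge 2$); combining $|\langle b,h_I\rangle|\le\|b\|_{\ast}\sqrt{|I|}$ (writing $\|b\|_{\ast}:=\sup_I|\langle b,h_I\rangle|/\sqrt{|I|}$) with $\langle|f_j|\rangle_I\le Mf_j(x)$ for $\alpha_j=1$ and $|\langle f_j,h_I\rangle|\le\sqrt{|I|}\,Mf_j(x)$ for the remaining $\sigma-2$ zero indices makes the powers of $|I|$ cancel, and the Cauchy--Schwarz step \eqref{eq:sf} from Case~II of Lemma~\ref{MPPTh1} yields
\[
|\pi_b^{\vec\alpha}(\vec f)(x)|\lesssim\|b\|_{\ast}\Bigl(\prod_{j\neq j',j''}Mf_j(x)\Bigr)Sf_{j'}(x)\,Sf_{j''}(x),
\]
after which H\"older and the $L^p$-boundedness of $M$ and $S$ deliver the required estimate.

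\textbf{Necessity.} For (a) with $\sigma=0$, test with $f_1=\cdots=f_m=\mathbf 1_J$. Since $\langle\mathbf 1_J\rangle_I=1$ for $I\subseteq J$ and $h_I|_J$ is constant for $I\supsetneq J$,
\[
\pi_b^{(1,\ldots,1)}(\mathbf 1_J,\ldots,\mathbf 1_J)\big|_J=F+\beta\,\mathbf 1_J,
\]
where $F:=\sum_{I\subseteq J}\langle b,h_I\rangle h_I=(b-\langle b\rangle_J)\mathbf 1_J$ has mean zero on $J$ and $\beta$ is a constant. Hence $F$ coincides on $J$ with $\pi_b^{(1,\ldots,1)}(\vec f)-\langle\pi_b^{(1,\ldots,1)}(\vec f)\rangle_J$, so
\[
\|F\|_{L^r(J)}\le 2\,\|\pi_b^{(1,\ldots,1)}(\vec f)\|_{L^r(J)}\lesssim\prod_j\|\mathbf 1_J\|_{p_j}=|J|^{1/r},
\]
giving $\frac1{|J|}\int_J|b-\langle b\rangle_J|^r\lesssim 1$, i.e.\ $b\in BMO^d_r=BMO^d$. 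The $\sigma=1$ case follows from the duality used in sufficiency. For (b), test with $f_j=h_J$ when $\alpha_j=0$ and $f_j=\mathbf 1_J$ when $\alpha_j=1$; the orthogonality $\langle h_J,h_I\rangle=\delta_{IJ}$ collapses the sum to $\langle b,h_J\rangle h_J^{1+\sigma(\vec\alpha)}$, and comparing the $L^r$ norm $|\langle b,h_J\rangle||J|^{1/r-(1+\sigma)/2}$ with $\prod_j\|f_j\|_{p_j}=|J|^{1/r-\sigma/2}$ forces $|\langle b,h_J\rangle|\le C\sqrt{|J|}$.

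\textbf{Main obstacle.} The delicate point is necessity in (a): the argument must produce the $BMO^d_r$-bound for the given $r\in(1,\infty)$, not merely for $r=2$ where Plancherel would trivialize everything. The observation that rescues it uniformly in $r$ is that the tail contribution from intervals $I\supsetneq J$ to $\pi_b^{(1,\ldots,1)}(\mathbf 1_J,\ldots,\mathbf 1_J)$ is constant on $J$ and therefore disappears upon subtracting the mean of $\pi_b^{(1,\ldots,1)}(\vec f)$ over $J$; no separate control of that tail is needed beyond the $L^r$-bound on the operator itself.
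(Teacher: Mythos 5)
Your proof is correct, and its overall architecture (square/maximal function estimates and the Cauchy--Schwarz step for sufficiency, indicator and Haar test functions for necessity, duality to pass from $\sigma(\vec\alpha)=0$ to $\sigma(\vec\alpha)=1$) coincides with the paper's. Two sub-steps differ in mechanism. For the necessity in (a) with $\sigma(\vec\alpha)=0$, the paper also tests on $f_j=|J|^{-1/p_j}\mathsf{1}_J$ but then extracts $\sum_{I\subseteq J}\widehat b(I)h_I$ from $\sum_{I}\widehat b(I)\La\mathsf{1}_J\Ra_I^m h_I$ by applying the Haar multiplier with symbol $\epsilon_I=\mathsf{1}_{\{I\subseteq J\}}$, using its $L^r$-boundedness; you instead observe that the contribution of the intervals $I\supsetneq J$ is constant on $J$ and disappears after subtracting $\La \pi_b^{(1,\ldots,1)}(\vec f)\Ra_J$, which is a slightly more elementary route to the same $BMO^d_r$ bound (most cleanly justified by noting $\La \pi_b^{(1,\ldots,1)}(\vec f),h_I\Ra=\La b,h_I\Ra$ for $I\subseteq J$, so that $(\pi_b^{(1,\ldots,1)}(\vec f)-\La\pi_b^{(1,\ldots,1)}(\vec f)\Ra_J)\mathsf{1}_J=(b-\La b\Ra_J)\mathsf{1}_J$); both then invoke the equivalence $BMO^d_r=BMO^d$. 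For the sufficiency in (b), the paper first proves the bilinear case $\pi_b^{(0,0)}$ and then reduces general $\sigma(\vec\alpha)>1$ to it by writing $\pi_b^{\vec\alpha}(\vec f)=P^{\vec\beta}(\pi_b^{(0,0)}(f_1,f_2),f_3,\ldots,f_m)$ and citing Lemma~\ref{MPPTh1}, whereas you run the pointwise estimate directly for all $m$ at once; your version avoids the intermediate exponent $q$ but is otherwise the same computation. These are organizational rather than substantive differences, and every step you outline goes through.
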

\begin{proof}
$(a)$ We prove this part first for $\sigma(\vec{\alpha}) = 0,$ that is, for $\alpha_1 = \cdots = \alpha_m = 1.$\\

\noindent
Assume that $b \in BMO^d.$ Then for $(f_1, \ldots,f_m) \in L^{p_1} \times \cdots \times L^{p_m},$ we have
\begin{eqnarray*}
\pi_b^{\vec{\alpha}} (f_1,\ldots,f_m) 
&=& P^{(0,\vec{\alpha})}(b,f_1,\ldots,f_m) \\
&=& \sum_{I\in\mathcal{D}} \La b, h_I \Ra \prod_{j=1}^m \La f_j \Ra_I h_I\\
&=& \sum_{I\in\mathcal{D}} \La \pi_b(f_1), h_I \Ra \prod_{j=2}^m \La f_j \Ra_I  h_I\\
&=& P^{(0, \alpha_2, \ldots,\alpha_m)} \left(\pi_b(f_1), f_2, \ldots, f_m \right).
\end{eqnarray*}
Since $b \in BMO^d$ and $f_1 \in L^{p_1}$ with $p_1 > 1,$ we have $\norm {\pi_b(f_1)}_{p_1} \lesssim \norm{b}_{BMO^d} \norm{f_1}_{p_1}.$ So,
\begin{eqnarray*}
\norm{\pi_b^{\vec{\alpha}} (f_1,\ldots,f_m)}_r
&=& \norm{P^{(0,\alpha_2, \ldots, \alpha_m)}\left(\pi_b(f_1), f_2, \ldots, f_m \right)}_r \\
&\lesssim & \norm {\pi_b(f_1)}_{p_1} \prod_{j=2}^m\norm{f_j}_{p_j}\\
&\lesssim & \norm{b}_{BMO^d}\prod_{j=1}^m\norm{f_j}_{p_j},
\end{eqnarray*}
where the first inequality follows from Theorem \ref{MPPTh1}.\\

\noindent
Conversely, assume that $\pi_b^{(1,\ldots,1)}: L^{p_1} \times \cdots \times L^{p_m}\rightarrow L^{r}$ is bounded. Then for $f_i = \abs{J}^{-\frac{1}{p_i}}\mathsf{1}_J(x)$ with $J \in \D,$
$$\left\Vert \pi_b^{(1,1,\ldots,1)}(f_1,f_2,\ldots,f_m)\right\Vert_r \leq \left \Vert \pi_b^{(1,1,\ldots,1)} \right \Vert_{L^{p_1}\times \cdots\times L^{p_m} \rightarrow L^r}, $$
since $\norm{f_i}_{p_i} = 1$ for all $ 1 \leq i \leq m.$ For such $f_i,$
\begin{eqnarray*}
\left\Vert \pi_b^{(1,1,\ldots,1)}(f_1,f_2,\ldots,f_m)\right\Vert_r &=& \left\Vert \abs{J}^{-\left(\frac{1}{p_1}+\frac{1}{p_2}+\cdots+\frac{1}{p_m}\right)}\;\pi_b^{(1,1,\ldots,1)}(\mathsf{1}_J,\mathsf{1}_J,\ldots,\mathsf{1}_J)\right\Vert_r \\
&=& \abs{J}^{-\frac{1}{r}}\left\Vert \sum_{I\in\D}\widehat{b}(I) \La \mathsf{1}_J\Ra_I^m h_I  \right\Vert_r.
\end{eqnarray*}
\noindent
Taking $\epsilon_I = 1$ if $I\subseteq J$ and $\epsilon_I = 0$ otherwise, we observe that
\begin{eqnarray*}
\left\Vert \sum_{J\supseteq I \in \D}\widehat{b}(I) h_I  \right\Vert_r 
&=& \left\Vert \sum_{J\supseteq I \in \D}\widehat{b}(I) \La \mathsf{1}_J\Ra_I^m h_I  \right\Vert_r \\
&=& \left\Vert \sum_{I\in\D}\epsilon_I\widehat{b}(I) \La \mathsf{1}_J\Ra_I^m h_I  \right\Vert_r\\
&\lesssim& \left\Vert \sum_{I\in\D}\widehat{b}(I) \La \mathsf{1}_J\Ra_I^m h_I  \right\Vert_r,
\end{eqnarray*}
where the last inequality follows from the boundedness of Haar multiplier $T_\epsilon$ on $L^r.$
Thus, we have
\begin{eqnarray*}
\sup_{J\in\D}\abs{J}^{-1/r}\left\Vert \sum_{J\supseteq I \in \D}\widehat{b}(I) h_I  \right\Vert_r 
&\lesssim & \sup_{J\in\D}\abs{J}^{-1/r} \left\Vert \sum_{I\in\D}\widehat{b}(I) \La \mathsf{1}_J\Ra_I^m h_I  \right\Vert_r\\
&\lesssim& \left \Vert \pi_b^{(1,1,\ldots,1)} \right \Vert_{L^{p_1}\times \cdots\times L^{p_m} \rightarrow L^r},
\end{eqnarray*}
proving that $b\in BMO^d.$\\

\noindent
Now the proof for $\sigma(\vec{\alpha}) = 1$ follows from the simple observation that $\pi_b^{\vec{\alpha}}$ is a transpose of $\pi_b^{(1,\ldots,1)}$. For example, if $\sigma(\vec{\alpha}) = 1$ with $\alpha_1 = 0$ and $\alpha_2 = \cdots = \alpha_m =1$ and if $r'$ is the conjugate exponent of $r,$ then for $g \in L^{r'}$
\begin{eqnarray*}
\left\La \pi_b^{\vec{\alpha}}(f_1,\ldots,f_m), g \right\Ra
&=& \left\La \sum_{I \in \D} \La b, h_I \Ra \La f_1, h_I \Ra \prod_{j=2}^m \La f_j \Ra_I h_I^2, g \right\Ra\\
&=& \sum_{I \in \D} \La b, h_I \Ra \La f_1, h_I \Ra \prod_{j=2}^m \La f_j \Ra_I \La g, h_I^2\Ra\\
&=& \sum_{I \in \D} \La b, h_I \Ra \La f_1, h_I \Ra \prod_{j=1}^m \La f_j \Ra_I \La g\Ra_I\\
&=& \left\La \sum_{I \in \D} \La b, h_I \Ra \La g \Ra_I \prod_{j=1}^m \La f_j \Ra_I h_I, f_1 \right\Ra\\
&=& \left\La \pi_b^{(1, \ldots, 1)}(g,f_2,\ldots,f_m), f_1 \right\Ra.
\end{eqnarray*}

\noindent
$(b)$ Assume that $ \norm {b}_*\equiv \displaystyle\sup_{I\in \D} \frac{\abs{\La b,h_I\Ra}}{\sqrt{\abs{I}}} < \infty.$ For $m =2$ we have
\begin{eqnarray*}
\displaystyle \int_\R \left\vert \pi_b^{(0,0)}(f_1,f_2) \right\vert^r dx 
&=& \displaystyle\int_\R \left\vert \sum_{I\in\D}\La b,h_I\Ra \La f_1,h_I\Ra \La f_2,h_I \Ra h_I^3(x) \right\vert^r dx \\
&\leq & \int_\R \left( \sum_{I\in\D}\abs{\La b,h_I\Ra}\, \abs{\La f_1,h_I\Ra }\, \abs{\La f_2,h_I \Ra} \frac{\mathsf{1}_I(x)}{\abs{I}^{3/2}} \right)^r dx  \\
&\leq & \int_\R \left( \sup_{I\in\D} \frac{\abs{\La b,h_I\Ra}}{\sqrt{\abs{I}}}\sum_{I\in\D} \abs{\La f_1,h_I\Ra }\, \abs{\La f_2,h_I \Ra} \frac{\mathsf{1}_I(x)}{\abs{I}} \right)^r dx\\
&=& \norm{b}_*^r  \int_\R \left(\sum_{I\in\D} \abs{\La f_1,h_I\Ra }\, \abs{\La f_2,h_I \Ra} \frac{\mathsf{1}_I(x)}{\abs{I}} \right)^r dx.
\end{eqnarray*}
Using \eqref{eq:sf} and H$\ddot{\text{o}}$lder's inequality we obtain
\begin{eqnarray*}
\displaystyle \int_\R \left\vert \pi_b^{(0,0)}(f_1,f_2) \right\vert^r dx 
&\leq& \norm{b}_*^r \int_\R (Sf_1)^r(x)\,(Sf_2)^r(x)\,dx\\
&\leq& \norm{b}_*^r \left(\int_\R \left\{(Sf_1)^r(x)\right\}^{p_1/r}\,dx\right)^{r/p_1}\left(\int_\R \left\{(Sf_2)^r(x)\right\}^{p_2/r}\,dx\right)^{r/p_2}\\
&\leq& \norm{b}_*^r \norm{Sf_1}_{p_1}^r \norm{Sf_2}_{p_2}^r\\
&\lesssim & \norm{b}_*^r \norm{f_1}_{p_1}^r \norm{f_2}_{p_2}^r.
\end{eqnarray*}
\noindent
Thus we have,
$$ \norm{\pi_b^{(0,0)}(f_1,f_2)}_r \lesssim \norm{b}_* \norm{f_1}_{p_1} \norm{f_2}_{p_2}.$$
\noindent
Observe that $$\pi_b^{(0,0)}(f_1,f_2)(I,0) = \La \pi_b^{(0,0)}(f_1,f_2), h_I \Ra = \frac{1}{\abs{I}}\La b,h_I\Ra \La f_1,h_I\Ra \La f_2,h_I \Ra.$$
\noindent
Now consider $m > 2$ and let $\sigma(\vec{\alpha})>1$. Without loss of generality we may assume that $\alpha_1 = \alpha_2 = 0.$ Then\\
\begin{eqnarray*}
\norm{\pi_b^{\vec{\alpha}} (f_1,f_2, \ldots,f_m)}_r & = & \left\Vert \sum_{I\in\D} \La b,h_I\Ra \La f_1,h_I\Ra \La f_2,h_I\Ra\prod_{j=3}^m f_j(I, \alpha_j) h_I^{1+\sigma(\vec{\alpha})}\right\Vert_r\\
& = & \left\Vert \sum_{I\in\D} \frac{1}{\abs{I}}\La b,h_I\Ra \La f_1,h_I\Ra \La f_2,h_I\Ra \prod_{j=3}^m f_j(I, \alpha_j) h_I^{\sigma(\vec{\alpha})-1}\right\Vert_r\\
&=& \left\Vert \sum_{I\in\D} \La \pi_b^{(0,0)}(f_1,f_2), h_I \Ra \prod_{j=3}^m f_j(I, \alpha_j)  h_I^{\sigma(\vec{\alpha})-1}\right\Vert_r\\
&=& \left\Vert P^{\vec{\beta}}(\pi_b^{(0,0)}(f_1,f_2),f_3,\ldots,f_m) \right\Vert_r\\
&\lesssim& \norm{\pi_b^{(0,0)}(f_1,f_2)}_{q} \prod_{j=3}^m \norm{f_j}_{p_j}\\
&\lesssim& \norm{b}_* \prod_{j=1}^m\norm{f_j}_{p_j}\\
\end{eqnarray*} 
where $\vec{\beta} =(0,\alpha_3,\ldots,\alpha_m) \in \{0,1\}^{m-1}$ and $\pi_b^{(0,0)}(f_1,f_2) \in L^q$ with $\frac{1}{p_1}+\frac{1}{p_2}=\frac{1}{q}, q>r>1.$\\

\noindent
Conversely, assume that $\pi_b^{\vec{\alpha}}: L^{p_1} \times \cdots \times L^{p_m}\rightarrow L^{r}$ is bounded and that $\sigma(\vec{\alpha}) > 1.$ Choose any $J \in \D$, and take $f_j = \abs{J}^{\frac{1}{2} - \frac{1}{p_j}} h_J$ if $\alpha_j = 0,$ and $f_j = \abs{J}^{-\frac{1}{p_j} } \mathsf{1}_J$ if $\alpha_j = 1$ so that $\norm{f_j}_{p_j} = 1.$ Then
$$ \left \Vert \pi_b^{\vec{\alpha}} (f_1, \ldots, f_m) \right \Vert_r \leq \left \Vert \pi_b^{\vec{\alpha}} \right \Vert _{L^{p_1} \times \cdots \times L^{p_m}}. $$
We also have
\begin{eqnarray*}
\left \Vert \pi_b^{\vec{\alpha}} (f_1, \ldots, f_m) \right \Vert_r &=& \left \Vert \abs{J}^{\frac{\sigma{(\vec{\alpha})}}{2} - \sum_{j=1}^m \frac{1}{p_j}} \La b, h_J \Ra h_J^{1+\sigma(\vec{\alpha})} \right \Vert_r \\
&=& \abs{J}^{\frac{\sigma{(\vec{\alpha})}}{2} - \frac{1}{r}} \abs{\La b, h_J \Ra} \left \Vert h_J^{1+\sigma(\vec{\alpha})} \right \Vert_r \\
&=& \abs{J}^{\frac{\sigma(\vec{\alpha})}{2} - \frac{1}{r}} \abs{\La b, h_J \Ra} \abs{J}^{-\frac{1+\sigma(\vec{\alpha})}{2}}\left \Vert \mathsf{1}_J \right \Vert_r \\
&=& \abs{J}^{\frac{\sigma(\vec{\alpha})}{2} - \frac{1}{r}} \abs{\La b, h_J \Ra} \abs{J}^{-\frac{1+\sigma(\vec{\alpha})}{2}}\abs{J}^{\frac{1}{r}}\\
&=& \frac{\abs{\La b, h_J \Ra}}{\sqrt{\abs{J}}}.
\end{eqnarray*}

\noindent
Thus  $ \frac{\abs{\La b, h_J \Ra}}{\sqrt{\abs{J}}} \leq \left \Vert \pi_b^{\vec{\alpha}}\right \Vert _{L^{p_1} \times \cdots \times L^{p_m}}.$ Since it is true for any $J \in D,$ we have $$ \displaystyle \sup_{J \in \D} \frac{\abs{\La b, h_J \Ra}}{\sqrt{\abs{J}}} \leq \left \Vert \pi_b^{\vec{\alpha}} \right \Vert _{L^{p_1} \times \cdots \times L^{p_m}} < \infty,$$ as desired.\\
\end{proof}

\noindent
Now that we have obtained strong type $L^{p_1} \times\cdots\times L^{p_m} \rightarrow L^r$ boundedness estimates for the paraproduct operators $P^{\vec{\alpha}}$ and $\pi_b^{\vec{\alpha}}$ when $1 < p_1, p_2, \ldots, p_m,r < \infty$ and $\sum_{j=1}^m \frac{1}{p_j} = \frac{1}{r}$, we are interested to investigate estimates corresponding to $\frac{1}{m} \leq r < \infty$. We will prove in Lemma $\ref{MPPL}$ that we obtain weak type estimates if one or more $p_i$'s are equal to 1. In particular, we obtain $L^{1} \times\cdots\times L^{1} \rightarrow L^{\frac{1}{m},\infty}$ estimates for those operators. Then it follows from multilinear interpolation that the paraproduct operators are strongly bounded from $L^{p_1} \times \cdots \times L^{p_m}$  to $L^r$ for $1 < p_1, p_2, \ldots, p_m < \infty$ and $\sum_{j=1}^m \frac{1}{p_j} = \frac{1}{r},$ even if $\frac{1}{m} < r \leq 1.$\\

\noindent
We first prove the following general lemma, which when applied to the operators $P^{\vec{\alpha}}$ and $\pi_b^{\vec{\alpha}}$ gives aforementioned weak type estimates.\\ 

\noindent
\begin{lm}\label{WBL} Let $T$ be a multilinear operator that is bounded from the product of Lebesgue spaces $L^{p_1} \times \cdots \times L^{p_m}$ to  $L^{r,\infty}$ for some $1 < p_1, p_2, \ldots, p_m < \infty$ with 
$$\sum_{j=1}^m \frac{1}{p_j} = \frac{1}{r}.$$
Suppose that for every $I \in \mathcal{D}$, $T(f_1, \ldots, f_m)$ is supported in $I$ if $f_i = h_I$ for some $i \in \{1, 2, \ldots, m\}$. Then $T$ is bounded from $L^1 \times \cdots \times L^1 \times L^{p_{k+1}} \times \cdots\times L^{p_m}  \rightarrow  L^{\frac{q_k}{q_k + 1},\infty}$ for each $k = 1, 2, \ldots,m,$ where $q_k$ is given by  
$$\frac{1}{q_k} = (k-1) + \frac{1}{p_{k+1}} + \cdots+\frac{1}{p_{m}}.$$
In particular, $T$ is bounded from $L^{1} \times \cdots \times L^{1}$ to $L^{\frac{1}{m},\infty}$.
\end{lm}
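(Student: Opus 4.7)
The plan is induction on $k$. At each step we apply a dyadic Calder\'on--Zygmund decomposition to the $k$-th argument of $T$ at a height $\gamma$, use the support hypothesis to localize the bad part, apply the given strong bound (base case) or the preceding weak bound (inductive step) to the good part, and optimize in $\gamma$.

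For $k=1$, normalize $\norm{f_1}_1 = \norm{f_2}_{p_2} = \cdots = \norm{f_m}_{p_m} = 1$, and for $\lambda>0$ let $\{I_j\}$ be the maximal dyadic intervals with $\La \abs{f_1}\Ra_{I_j}>\gamma$. These are disjoint with $\sum_j \abs{I_j} \leq \gamma^{-1}$, and $\abs{f_1}\leq \gamma$ off $\bigcup_j I_j$. Write $f_1 = g + b$ with $g = f_1 \mathsf{1}_{(\cup I_j)^c} + \sum_j \La f_1 \Ra_{I_j} \mathsf{1}_{I_j}$, so $\norm{g}_\infty \lesssim \gamma$ and $\norm{g}_1 \lesssim 1$, giving $\norm{g}_{p_1} \lesssim \gamma^{1-1/p_1}$, and $b_j = (f_1 - \La f_1\Ra_{I_j})\mathsf{1}_{I_j}$. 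The mean-zero property of $b_j$ combined with dyadic nesting shows that the Haar expansion of $b_j$ uses only $h_I$ with $I \subseteq I_j$: if $I \supsetneq I_j$ then $h_I$ is constant on $I_j$ and $\La b_j, h_I\Ra = 0$, while if $I \cap I_j = \emptyset$ then $h_I$ vanishes on $I_j$. By multilinearity and the support hypothesis, $T(b_j, f_2, \ldots, f_m) = \sum_{I \subseteq I_j} \La b_j, h_I \Ra \, T(h_I, f_2, \ldots, f_m)$ is supported in $I_j$, hence $T(b, f_2, \ldots, f_m)$ is supported in $\bigcup_j I_j$. Splitting at level $\lambda/2$, the hypothesized weak $(p_1, \ldots, p_m; r)$ bound gives
\[
\abs{\{\abs{T(g,f_2,\ldots,f_m)}>\lambda/2\}} \lesssim \lambda^{-r} \norm{g}_{p_1}^r \lesssim \lambda^{-r} \gamma^{(1-1/p_1)r},
\]
while the bad-part contribution is at most $\sum_j \abs{I_j} \leq \gamma^{-1}$. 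Choosing $\gamma$ so the two terms equalize produces the exponent $r/[1 + (1-1/p_1)r]$, which a direct arithmetic check shows coincides with $q_1/(q_1+1)$.

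For the inductive step, assume the $(k{-}1)$-case and apply the same C--Z decomposition to $f_k$; the bad-part localization argument is identical because the support hypothesis on $T$ is symmetric in the slots. Applying the induction hypothesis with exponent $t = q_{k-1}/(q_{k-1}+1)$ to the good part yields a total bound of the form $\lambda^{-t}\gamma^{(1-1/p_k)t} + \gamma^{-1}$, and optimizing while using $1/t = (k-1) + 1/p_k + \sum_{j>k}1/p_j$ produces the desired exponent $q_k/(q_k+1)$. The endpoint $k=m$ gives the $L^1 \times \cdots \times L^1 \to L^{1/m, \infty}$ conclusion. The only nonroutine point in the entire argument is the localization of $T(b_j, \ldots)$ in a single dyadic interval, which is precisely what the operator-level support hypothesis of the lemma was designed to deliver; once this is in hand, the remainder is standard Calder\'on--Zygmund/interpolation bookkeeping.
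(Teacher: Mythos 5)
Your proof is correct and follows essentially the same route as the paper: a dyadic Calder\'on--Zygmund decomposition in one slot at a time, the weak bound from the previous step applied to the good part, and the operator-level support hypothesis (via the Haar expansion of each mean-zero bad piece) to confine the bad part to the union of the stopping intervals, whose total measure is controlled. The only cosmetic difference is that you leave the decomposition height $\gamma$ free and optimize at the end, whereas the paper fixes the height $\lambda^{q_1/(q_1+1)}$ from the outset; the exponents agree, as your arithmetic check confirms.
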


\noindent
\begin{proof} We first prove that $T$ is bounded from $L^{1} \times L^{p_2}\times \cdots \times L^{p_m} $ to $ L^{\frac{q_1}{q_{1}+1},\infty}.$\\
Let $\lambda > 0$ be given. We have to show that
$$\abs {\{ x: \abs {T(f_1, f_2, \ldots,f_m)(x)} > \lambda \}} \lesssim \left(\frac{\norm{f_1}_1 \prod_{j=2}^m\norm{f_j}_{p_j}}{\lambda}\right)^{\frac{q_1}{1+q_1}}$$
for all $(f_1, f_2, \ldots,f_m) \in L^{1} \times L^{p_2} \cdots \times L^{p_m}$.\\
Without loss of generality, we assume $\norm{f_1}_{1} = \norm{f_2}_{p_2}= \cdots = \norm{f_m}_{p_m} =1,$ and prove that
$$\abs {\{ x: \abs {T(f_1, f_2, \ldots,f_m)(x)} > \lambda \}} \lesssim \lambda^{-\frac{q_1}{1+q_1}}.$$
For this, we apply Calder$\acute{\text{o}}$n-Zygmund decomposition to the function $f_1$ at height $\lambda^{\frac{q_1}{q_{1}+1}}$ to obtain `good' and `bad' functions $g_1$ and $b_1$, and a sequence $\{I_{1,j}\}$ of disjoint dyadic intervals such that
$$ f_1 = g_1 + b_1, \;\;\; \norm{g_1}_{p_1} \leq \left(2 \lambda^{\frac{q_1}{q_{1}+1}}\right)^{
'} \norm{f_1}_1^{1/p_1} = \left(2 \lambda^{\frac{q_1}{q_{1}+1}}\right)^{\frac{p_1-1}{p_1}}\;\;\; \text{ and } \;\;\; b_1 = \sum_j b_{1,j},$$
where $$\text{supp}(b_{1,j}) \subseteq I_{1,j},\;\; \; \int_{I_{1,j}} b_{1,j} dx = 0,\;\; \text{ and } \;\; \sum_j\abs{I_{1,j}} \leq {\lambda}^{-\frac{q_1}{q_1+1}}\norm{f_1}_1 = {\lambda}^{-\frac{q_1}{q_1+1}}.$$\\
Multilinearity of $T$ implies that
$$\left|\left\{x:|T(f_1,\ldots,f_m)(x)| > \lambda \right\} \right|$$
$$ \leq \left|\left\{x:|T(g_1, f_2, \ldots,f_m)(x)| > \frac{\lambda}{2}\right\}\right| \; + \; \left|\left \{x:|T(b_1, f_2, \ldots,f_m)(x)| > \frac{\lambda}{2} \right \}\right|.$$
Since $g_1 \in L^{p_1}$ and $T$ is bounded from $L^{p_1} \times \cdots \times L^{p_m}$ to $L^{r,\infty}$, we have
\begin{eqnarray*}
\abs {\{ x: \abs {T(g_1, f_2, \ldots,f_m)(x)} > \lambda/2 \}}
 & \lesssim & \left(\frac{2\norm{g_1}_{p_1} \displaystyle\prod_{j=2}^m f_j(J,\alpha_j)}{\lambda}\right)^r\\
& \leq & \left(\frac{2\left(2 \lambda^{\frac{q_1}{q_{1}+1}}\right)^{\frac{p_1 -1}{p_1}}} {\lambda}\right)^r\\
& \lesssim & \lambda^{r\left(\frac{q_1(p_1 -1)}{p_1(q_{1}+1)} -1 \right)}
\end{eqnarray*}
Now,  $\frac{1}{r} =  \sum_{j=1}^m \frac{1}{p_j} = \frac{1}{p_1} + \frac{1}{q_1}$ implies that $r = \frac{p_1 q_1} {p_1+q_1}.$ So,\\
\begin{eqnarray*} r\left(\frac{q_1(p_1 -1)}{p_1(q_{1}+1)} -1\right) &=& \frac{p_1 q_1}{(p_1+q_1)}\left(\frac{p_1q_1 - q_1 - p_1q_1 - p_1}{p_1(q_{1}+1)}\right)\\
 &=& \frac{p_1 q_1}{(p_1+q_1)}\frac{(-p_1 - q_1)}{p_1(q_{1}+1)}\\
 &=& -\frac{q_1}{q_1+1}. 
\end{eqnarray*}

\noindent
Thus we have:
$$\abs {\{ x: \abs {T(g_1, f_2, \ldots,f_m)(x)} > \lambda/2 \}} \lesssim \lambda^{-\frac{q_1}{1+q_1}}.$$

\noindent
From the properties of `bad' function $b_1$ we deduce that $\La b_1, h_I \Ra \neq 0$ only if $I \subseteq I_{1,j}$ for some $j$. The hypothesis of the lemma on the support of $T(f_1, \ldots, f_m)$ then implies that 
$$ \text{supp}\left(T(b_1,f_2, \ldots, f_m)\right) \subseteq \cup_j I_{1,j}.$$ 
Thus, $$ \left|\left \{x:|T(b_1, f_2, \ldots,f_m)(x)| > \frac{\lambda}{2} \right \}\right| \leq \left| \cup_j I_{1,j} \right| \leq \lambda^{-\frac{q_1}{1+q_1}}.$$

\noindent
Combining these estimates corresponding to $g_1$ and $b_1$, we have the desired estimate
$$\abs {\{ x: \abs {T(f_1, f_2, \ldots,f_m)(x)} > \lambda \}} \lesssim \lambda^{-\frac{q_1}{1+q_1}}.$$

\noindent
Now beginning with the $L^{1} \times L^{p_2}\times \cdots \times L^{p_m} \rightarrow  L^{\frac{q_1}{q_{1}+1},\infty}$ estimate, we use the same argument to lower the second exponent to 1 proving that $T$ is bounded from $L^{1} \times L^{1}\times L^{p_3} \times \cdots \times L^{p_m} $ to $ L^{\frac{q_2}{q_{2}+1},\infty}, $ where $q_2$ is given by $\frac{1}{q_2} = 1 + \frac{1}{p_{3}} + \cdots +\frac{1}{p_{m}}.$\\

\noindent
We continue the same process until we obtain $L^{1} \times L^{1}\times \cdots \times L^{1} \rightarrow  L^{\frac{q_m}{q_{m}+1},\infty}$ boundedness of $T$ with $\frac{1}{q_m} = 1+1+ \cdots+1 \; (m-1 \text{ terms}) = m-1.$ This completes the proof since $\frac{q_m}{q_m+1} = \frac{1}{m}.$ \end{proof}
\noindent 
\begin{lm} \label{MPPL}
 Let $ \vec{\alpha} = (\alpha_1,\alpha_2,\ldots,\alpha_m) \in \{0,1\}^m, 1 \leq p_1, p_2, \ldots, p_m < \infty$ and $\sum_{j=1}^m \frac{1}{p_j} = \frac{1}{r}.$ Then
\begin{enumerate}[label = $(\alph*)$]
\item For $\vec{\alpha}  \neq (1,1,\ldots,1),$  $P^{\vec{\alpha}}$ is bounded from $L^{p_1} \times \cdots \times L^{p_m}$  to $L^{r,\infty}.$ 
\item If $b \in BMO^d$ and $\sigma(\vec{\alpha}) \leq 1, \,\pi_b^{\vec{\alpha}}$ is bounded from $L^{p_1} \times \cdots \times L^{p_m}$  to  $L^{r,\infty}.$ 
\item If $\displaystyle\sup_{I\in \D} \frac{\abs{\La b,h_I\Ra}}{\sqrt{\abs{I}}} < \infty$ and $\sigma(\vec{\alpha}) > 1, \,\pi_b^{\vec{\alpha}}$ is bounded from $L^{p_1} \times \cdots \times L^{p_m}$  to  $L^{r,\infty}.$ 
\end{enumerate}
\end{lm}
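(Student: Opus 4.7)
The plan is to deduce all three parts of Lemma \ref{MPPL} from Lemma \ref{WBL}, applied to the operators $P^{\vec{\alpha}}$ and $\pi_b^{\vec{\alpha}}$. The hypotheses of Lemma \ref{WBL} are two: a starting estimate $L^{p_1} \times \cdots \times L^{p_m} \to L^{r,\infty}$ with every $p_j > 1$, and the support condition that $T(f_1,\ldots,f_m)$ is supported in $I$ whenever some $f_i = h_I$. The starting estimates are exactly those supplied by Lemma \ref{MPPTh1} in case (a), by Lemma \ref{MPPTh2}(a) in case (b) (since $b \in BMO^d$), and by Lemma \ref{MPPTh2}(b) in case (c).

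Next I would verify the support hypothesis. Fix $I \in \D$ and suppose $f_i = h_I$. A generic summand of $P^{\vec{\alpha}}(f_1,\ldots,f_m)$ or $\pi_b^{\vec{\alpha}}(f_1,\ldots,f_m)$ has the form $c_J \prod_{j} f_j(J,\alpha_j) h_J^{\sigma(\vec{\alpha})}$ or $c_J \prod_{j} f_j(J,\alpha_j) h_J^{1+\sigma(\vec{\alpha})}$ with some scalar $c_J$ (possibly $\La b,h_J\Ra$), and is supported in $J$. If $\alpha_i = 0$ then $f_i(J,0) = \La h_I, h_J\Ra$ is zero unless $J = I$, so only the $J = I$ term survives. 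If $\alpha_i = 1$ then $f_i(J,1) = \La h_I \Ra_J$ vanishes unless $J \subsetneq I$, and in that case $h_J$ is already supported in $I$. In either case the output is supported in $I$, so the hypothesis of Lemma \ref{WBL} is met.

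Applying Lemma \ref{WBL} then gives boundedness from $L^1 \times \cdots \times L^1 \times L^{p_{k+1}} \times \cdots \times L^{p_m}$ to $L^{q_k/(q_k+1),\infty}$ when exactly $k$ of the input exponents equal $1$. A short arithmetic check confirms that $q_k/(q_k+1) = r$: indeed $1/r = k + \sum_{j>k} 1/p_j = 1 + 1/q_k$. To lift the restriction that the $L^1$-inputs occupy the first $k$ positions, I would invoke the permutation invariance noted after Definition $3.1$, namely $P^{\vec{\alpha}}(f_1,\ldots,f_m) = P^{\vec{\beta}}(g_1,\ldots,g_m)$ whenever $\vec{\beta}$ and $(g_j)$ are the corresponding permutations; the same reshuffling is legitimate for $\pi_b^{\vec{\alpha}}$ since $b$ is a fixed external function and the permutation acts only on the $f_j$'s.

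The argument is essentially mechanical, with no serious obstacle: the routine point is the arithmetic identification of $q_k/(q_k+1)$ with $r$, and the only verification with any content is the support claim, which reduces to the elementary facts $\La h_I, h_J\Ra = \delta_{IJ}$ and $\La h_I\Ra_J \neq 0 \iff J \subsetneq I$.
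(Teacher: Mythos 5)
Your proposal is correct and follows essentially the same route as the paper: verify the support hypothesis of Lemma \ref{WBL} via $\La h_I,h_J\Ra=\delta_{IJ}$ and $\La h_I\Ra_J\neq 0$ only when $J\subsetneq I$, then feed in the strong-type bounds of Lemmas \ref{MPPTh1} and \ref{MPPTh2}. Your additional checks (the arithmetic $q_k/(q_k+1)=r$ and the permutation invariance for repositioning the $L^1$ inputs) are details the paper leaves implicit, but they do not change the argument.
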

\begin{proof} By orthogonality of Haar functions, $h_I(J,0) = \La h_I, h_J \Ra = 0 $ for any two distinct dyadic intervals $I$ and $J.$ The Haar functions have mean value 0,  so it is easy to see that
$$\La h_I \Ra_J \neq 0 \text{ only if } J \subsetneq I$$
since any two dyadic intervals are either disjoint or one is contained in the other.\\

\noindent
Consequently, if some $f_i = h_I,$ then
 $$P^{\vec{\alpha}}(f_1, f_2,\ldots,f_m) = \sum_{J\subseteq I}\prod_{j=1}^m f_j(J,\alpha_j) h_J^{\sigma(\vec{\alpha})}$$
 and, $$\pi_b^{\vec{\alpha}}(f_1, f_2,\ldots,f_m) = \sum_{J\subseteq I}\La b,h_J \Ra \prod_{j=1}^m f_j(J,\alpha_j) h_J^{1+ \sigma(\vec{\alpha})},$$ 
which are both supported in $I.$ Since the paraproducts are strongly (and hence weakly) bounded from $L^{p_1} \times \cdots \times L^{p_m}\rightarrow L^r$, the proof follows immediately from Lemma $\ref{WBL}.$ \end{proof} 

\noindent
Combining the results of Lemmas \ref{MPPTh1}, \ref{MPPTh2} and \ref{MPPL}, and using multilinear interpolation (see \cite{GLLZ}), we have the following theorem:
\begin{thm}
Let $ \vec{\alpha} = (\alpha_1,\alpha_2,\ldots,\alpha_m) \in \{0,1\}^m$ and $ 1 < p_1, p_2, \ldots, p_m < \infty$ with $\displaystyle \sum_{j=1}^m \frac{1}{p_j} = \frac{1}{r}.$ Then
\begin{enumerate}[label = $(\alph*)$]
\item For $\vec{\alpha}  \neq (1,1,\ldots,1),$  $\displaystyle \left\Vert P^{\vec{\alpha}}(f_1,f_2,\ldots,f_m)\right\Vert_r \lesssim \prod_{j=1}^m\norm{f_j}_{p_j}.$
\item For $\sigma(\vec{\alpha}) \leq 1,$ $\displaystyle \left\Vert\pi_b^{\vec{\alpha}}(f_1,f_2,\ldots,f_m)\right\Vert_r \lesssim \norm{b}_{BMO^d}\prod_{j=1}^m\norm{f_j}_{p_j},$ if and only if $b \in BMO^d.$\\

\item For $\sigma(\vec{\alpha}) > 1,$ $\displaystyle\left\Vert\pi_b^{\vec{\alpha}}(f_1,f_2,\ldots,f_m)\right\Vert_r \leq C_b \prod_{j=1}^m\norm{f_j}_{p_j},$ if and only if $\displaystyle\sup_{I\in \D} \frac{\abs{\La b,h_I\Ra}}{\sqrt{\abs{I}}} < \infty.$
\end{enumerate}
In each of the above cases, the paraproducts are weakly bounded if $1\leq p_1, p_2, \ldots, p_m < \infty$.\\
\end{thm}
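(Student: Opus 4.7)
The plan is to assemble the strong-type bounds of Lemmas \ref{MPPTh1} and \ref{MPPTh2} together with the endpoint weak-type estimates of Lemma \ref{MPPL}, and then to apply multilinear real interpolation to fill in the range $\tfrac{1}{m}<r\le 1$ not directly covered by those strong-type lemmas.

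In the regime $1<r<\infty$, Lemma \ref{MPPTh1} yields (a) and Lemma \ref{MPPTh2} delivers both directions of (b) and (c) at once; in particular, the necessity portions of (b) and (c) follow there by testing $\pi_b^{\vec\alpha}$ on $L^{p_i}$-normalized products of indicator functions or Haar functions of a fixed dyadic interval $J$. Thus nothing further is required when $r>1$, and the weak-type claim in the last sentence of the theorem is nothing other than Lemma \ref{MPPL}.

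For the remaining range $\tfrac{1}{m}<r\le 1$, the argument proceeds by interpolation. By Lemma \ref{MPPL}, the operators $P^{\vec\alpha}$ and (under the appropriate hypothesis on $b$) the paraproducts $\pi_b^{\vec\alpha}$ map $L^1\times\cdots\times L^1$ into $L^{1/m,\infty}$, with constants controlled by $\|b\|_{BMO^d}$ in (b) and by $\sup_{I\in\D}|\langle b,h_I\rangle|/\sqrt{|I|}$ in (c). Pairing this endpoint with a strong bound $L^{q_1}\times\cdots\times L^{q_m}\to L^s$ at a point with $s>1$ obtained in the previous step, I would invoke the multilinear real-interpolation theorem of \cite{GLLZ} to produce the strong bound $L^{p_1}\times\cdots\times L^{p_m}\to L^r$, with the implicit constant preserving the correct dependence on $b$. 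The necessity in (b) and (c) for $r\le 1$ is then obtained by the same test-tuple computations as in Lemma \ref{MPPTh2} applied to a fixed $J\in\D$, which does not actually require $r>1$ once one avoids the Haar-multiplier step and extracts $|\langle b,h_J\rangle|/\sqrt{|J|}$ directly from the norm of $\pi_b^{\vec\alpha}$ on those specific tuples.

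The principal obstacle is the interpolation across the Banach/quasi-Banach boundary at $r=1$: the classical multilinear Marcinkiewicz theorem requires target exponent at least $1$, so for $r<1$ one is genuinely forced to invoke a multilinear interpolation statement valid in the quasi-Banach range, which is exactly what \cite{GLLZ} provides. Once that tool is in hand, the interpolation parameters $\theta_j$ determined by $1/p_j=(1-\theta_j)+\theta_j/q_j$ are chosen so that $\sum_{j=1}^m 1/p_j=1/r$, and the implicit constants retain the claimed linear dependence on $\|b\|_{BMO^d}$ in case (b) and on $\sup_{I\in\D}|\langle b,h_I\rangle|/\sqrt{|I|}$ in case (c).
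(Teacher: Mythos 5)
Your proposal is correct and follows essentially the same route as the paper: the strong-type bounds of Lemmas \ref{MPPTh1} and \ref{MPPTh2} for $r>1$, the endpoint weak-type estimates of Lemma \ref{MPPL} (built on Lemma \ref{WBL}), and the multilinear real interpolation theorem of \cite{GLLZ} to cover $\tfrac{1}{m}<r\le 1$. Your remark on recovering the necessity directions when $r\le 1$ by testing directly (avoiding the Haar-multiplier step, which needs $r>1$) is in fact more explicit than the paper, which leaves that point implicit.
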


\section {Multilinear Haar multipliers and multilinear commutators}
\subsection{Multilinear Haar Multipliers}

\noindent
In this subsection we introduce multilinear Haar multipliers, and study their boundedness properties.\\

\noindent
\begin{dfn}
Given $\vec{\alpha} = (\alpha_1,\alpha_2, \ldots,\alpha_m) \in \{0,1\}^m,$ and a symbol sequence $\epsilon = \{\epsilon_I\}_{I\in\D},$ we define \textit{m-linear Haar multipliers} by
$$ T_\epsilon^{\vec{\alpha}} (f_1,f_2, \ldots,f_m) \equiv \sum_{I\in \D} \epsilon_I \prod_{j=1}^m f_j(I,\alpha_j) h_I^{\sigma(\vec{\alpha})}.$$
\end{dfn}
\noindent
\begin{thm}\label{MHMTh}
Let $\epsilon = \{\epsilon_I\}_{I\in\D}$ be a given sequence and let $\vec{\alpha} = (\alpha_1,\alpha_2, \ldots,\alpha_m) \in U_m.$  Let $1<p_1,p_2, \ldots,p_m<\infty$ with
$$\displaystyle \sum_{j=1}^m \frac{1}{p_j} = \frac{1}{r}.$$
Then $T_\epsilon^{\vec{\alpha}}$ is bounded from $L^{p_1}\times L^{p_2} \times \cdots\times L^{p_m}$ to $L^r$ if and only if $\norm{\epsilon}_\infty:= \displaystyle \sup_{I \in \D}\abs{\epsilon_I} < \infty.$\\
Moreover, $T_\epsilon^{\vec{\alpha}}$ has the corresponding weak-type boundedness if $1 \leq p_1,p_2, \ldots,p_m<\infty.$
\end{thm}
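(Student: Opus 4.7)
The plan is to mimic closely the proofs of Lemmas \ref{MPPTh1} and \ref{MPPTh2} for the paraproducts $P^{\vec{\alpha}}$ and $\pi_b^{\vec{\alpha}}$, since the Haar multiplier $T_\epsilon^{\vec{\alpha}}$ is obtained from $P^{\vec{\alpha}}$ simply by inserting the bounded sequence $\{\epsilon_I\}$. The necessity direction will be handled by testing on Haar/indicator functions adapted to a single dyadic interval, and the weak-type statement will follow from Lemma \ref{WBL} after verifying its support hypothesis.

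For the sufficient direction, assume $\|\epsilon\|_\infty < \infty$. Splitting into the two cases as in Lemma \ref{MPPTh1}, in Case I ($\sigma(\vec{\alpha}) = 1$, with $\alpha_{j_0} = 0$) the square function estimate gives
\[
\bigl\|T_\epsilon^{\vec{\alpha}}(f_1,\ldots,f_m)\bigr\|_r \lesssim \Bigl\|\Bigl(\sum_{I\in\D} \epsilon_I^2 \!\!\prod_{j\neq j_0}|\La f_j\Ra_I|^2 |\La f_{j_0},h_I\Ra|^2 \tfrac{\mathsf{1}_I}{|I|}\Bigr)^{1/2}\Bigr\|_r \leq \|\epsilon\|_\infty \Bigl\|\prod_{j\neq j_0}Mf_j \cdot Sf_{j_0}\Bigr\|_r,
\]
and H\"older's inequality together with the boundedness of $M$ and $S$ yields the desired estimate. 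In Case II ($\sigma(\vec{\alpha}) > 1$), picking two indices $j', j''$ with $\alpha_{j'} = \alpha_{j''} = 0$ and handling the remaining Haar coefficients by the pointwise bound $|\La f_j,h_I\Ra|/\sqrt{|I|} \le Mf_j$, then applying Cauchy--Schwarz as in \eqref{eq:sf} to two of the Haar-coefficient factors, one bounds $|T_\epsilon^{\vec{\alpha}}(f_1,\ldots,f_m)(x)|$ by $\|\epsilon\|_\infty \prod_{j\neq j', j''} Mf_j(x) \cdot Sf_{j'}(x)\cdot Sf_{j''}(x)$, and generalized H\"older closes the estimate.

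For the necessary direction, fix $J \in \D$ and set $f_j = |J|^{\frac{1}{2}-\frac{1}{p_j}} h_J$ whenever $\alpha_j = 0$ and $f_j = |J|^{-\frac{1}{p_j}} \mathsf{1}_J$ whenever $\alpha_j = 1$; each satisfies $\|f_j\|_{p_j} = 1$. For $\alpha_j = 0$, $\La f_j, h_I\Ra = |J|^{\frac12-\frac{1}{p_j}} \delta_{IJ}$, so only $I = J$ contributes in the sum defining $T_\epsilon^{\vec{\alpha}}$; for $\alpha_j = 1$, $\La f_j\Ra_J = |J|^{-\frac{1}{p_j}}$. Collecting the powers of $|J|$ and computing $\|h_J^{\sigma(\vec{\alpha})}\|_r = |J|^{\frac{1}{r}-\frac{\sigma(\vec{\alpha})}{2}}$, the factors cancel exactly and one obtains $\|T_\epsilon^{\vec{\alpha}}(f_1,\ldots,f_m)\|_r = |\epsilon_J|$, whence $\|\epsilon\|_\infty \le \|T_\epsilon^{\vec{\alpha}}\|_{L^{p_1}\times\cdots\times L^{p_m}\to L^r}$.

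For the weak-type statement, I would apply Lemma \ref{WBL} with $T = T_\epsilon^{\vec{\alpha}}$. The strong $L^{p_1}\times\cdots\times L^{p_m} \to L^r$ bound established above (hence $L^{r,\infty}$) is one hypothesis. The support hypothesis is verified by noting that if some $f_i = h_I$, then in case $\alpha_i = 0$ the orthogonality $\La h_I, h_J\Ra = \delta_{IJ}$ forces $J = I$ in the defining sum, while in case $\alpha_i = 1$ the mean value $\La h_I\Ra_J$ vanishes unless $J \subsetneq I$; in either case every surviving index satisfies $J \subseteq I$, so $T_\epsilon^{\vec{\alpha}}(f_1,\ldots,f_m)$ is supported in $I$. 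Lemma \ref{WBL} then delivers the weak-type boundedness from $L^1\times\cdots\times L^1\times L^{p_{k+1}}\times\cdots\times L^{p_m}$ into $L^{q_k/(q_k+1),\infty}$ for each $k$, and multilinear interpolation between the strong and endpoint weak bounds fills in all remaining exponent tuples with $1\le p_j < \infty$. The main obstacle is purely bookkeeping: making sure the exponent arithmetic in the necessity test collapses to $|\epsilon_J|$ on the nose, and verifying carefully that no terms outside $J \subseteq I$ survive in the support argument.
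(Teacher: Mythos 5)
Your proof is correct, but your sufficiency argument takes a different route from the paper's. The paper exploits the assumption $\vec{\alpha}\in U_m$ to pick an index with $\alpha_1=0$, writes $\epsilon_I f_1(I,0)=\La T_\epsilon(f_1),h_I\Ra$, and thereby identifies $T_\epsilon^{\vec{\alpha}}(f_1,\ldots,f_m)=P^{\vec{\alpha}}(T_\epsilon(f_1),f_2,\ldots,f_m)$; the strong bound then falls out in one line from the boundedness of the linear Haar multiplier together with Theorem 3.8 for $P^{\vec{\alpha}}$ (which already covers $\tfrac1m<r\le 1$). You instead re-run the square-function/maximal-function machinery of Lemma \ref{MPPTh1} with $\epsilon_I$ inserted and $\|\epsilon\|_\infty$ pulled out. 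Both work; the paper's reduction is shorter and inherits the full exponent range for free, while yours is more self-contained at the cost of repeating the Lemma \ref{MPPTh1} computation. One point to make explicit in your version: in your Case I the passage $\|g\|_r\lesssim\|Sg\|_r$ requires $r>1$, so for $\tfrac1m<r\le1$ you must invoke the interpolation with the weak endpoint bounds not only for the ``moreover'' clause but also to complete the strong-type claim — your closing sentence gestures at this but ties it only to the weak-type statement. Your necessity test (normalized Haar/indicator functions on a fixed $J$, collapsing the sum to the single term $I=J$ and computing $\|T_\epsilon^{\vec{\alpha}}(\vec f)\|_r=|\epsilon_J|$) and your verification of the support hypothesis of Lemma \ref{WBL} match the paper's argument.
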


\noindent
\begin{proof}
To prove this lemma we use the fact that the linear Haar multiplier
$$T_\epsilon(f) = \sum_{I\in\D} \epsilon_I \La f,h_I\Ra h_I$$
is bounded on $L^p$ for all $1<p<\infty$ if $\norm{\epsilon}_\infty:= \displaystyle \sup_{I \in \D}\abs{\epsilon_I} < \infty,$ and that $\La T_\epsilon(f),h_I \Ra = \epsilon_I \La f,h_I\Ra.$\\

\noindent
By assumption $\sigma(\vec{\alpha})\geq 1$. Without loss of generality we may assume that $\alpha_i = 0$ if $1\leq i \leq \sigma(\vec{\alpha})$ and $\alpha_i = 1$ if $\sigma(\vec{\alpha}) < i \leq m.$ In particular, we have $\alpha_1 = 0.$  Then
 $$\epsilon_I f_1(I,\alpha_1) = \epsilon_I \La f_1,h_I\Ra = \La T_\epsilon(f_1),h_I \Ra = T_\epsilon(f_1)(I,\alpha_1).$$
\noindent
First assume that $\norm{\epsilon}_\infty:= \displaystyle \sup_{I \in \D}\abs{\epsilon_I} < \infty.$\\

\noindent
Then,
\begin{eqnarray*}
 \norm{T_\epsilon^{\vec{\alpha}} (f_1,f_2, \ldots,f_m)}_r 
&=& \left\Vert \sum_{I\in \D} \epsilon_I \prod_{j=1}^m f_j(I,\alpha_j) h_I^{\sigma(\vec{\alpha})}\right\Vert_r\\
&=& \left\Vert\sum_{I\in \D} T_\epsilon(f_1)(I,\alpha_1)\prod_{j=2}^m f_j(I,\alpha_j) h_I^{\sigma(\vec{\alpha})}\right\Vert_r\\
&=& \norm{P^{\vec{\alpha}}(T_\epsilon(f_1),f_2, \ldots,f_{m})}_r\\
& \lesssim & \norm{T_\epsilon(f_1)}_{p_1} \prod_{j=2}^m \norm{f_j}_{p_j}\\
& \lesssim & \prod_{j=1}^m\norm{f_j}_{p_j}.\\
\end{eqnarray*}

\noindent
Conversely, assume that $T_\epsilon^{\vec{\alpha}}$ : $L^{p_1}\times L^{p_2} \times \cdots\times L^{p_m}\rightarrow L^r$ is bounded, and  let $\sigma(\vec{\alpha}) = k.$ Recall that $\alpha_i = 0$ if $1\leq i \leq \sigma(\vec{\alpha}) = k$ and $\alpha_i = 1$ if $k=\sigma(\vec{\alpha}) < i \leq m.$ Taking $f_i = h_I$ if $1 \leq i \leq k$ and $f_i = \mathsf{1}_I$ if $k < i \leq m,$ we observe that\\

\begin{eqnarray*}
{\norm{T_\epsilon^{\vec{\alpha}}(f_1,f_2, \ldots,f_m)}}_r &=& \left(\int_{\R} \abs{\epsilon_I h_I^k(x)}^r dx\right)^{1/r} \\
&=&  \left(\frac{\abs{\epsilon_I}^r}{\abs{I}^{kr/2}}\int_{\R} \mathsf{1}_I(x) dx\right)^{1/r} \\
& = & \frac{\abs{\epsilon_I}}{\abs{I}^{k/2}} \abs{I}^{1/r} 
\end{eqnarray*}
and,
\begin{eqnarray*}
\prod_{j=1}^m\norm{f_j}_{p_j} &=& \prod_{i=1}^k \left(\int_{\R} \abs{ h_I(x)}^{p_i} dx \right)^{1/p_i}  \prod_{j=k+1}^m\left(\int_{\R} \abs{\mathsf{1}_I(x)}^{p_j} dx\right)^{1/p_j}\\
&=&  \prod_{i=1}^k\left(\frac{1}{\abs{I}^{p_i/2}} \int_{\R} \mathsf{1}_I(x) dx \right)^{1/p_i}\prod_{j=k+1}^m\left(\int_{\R} \mathsf{1}_I(x) dx \right)^{1/p_j}\\
& = & \prod_{i=1}^k\left(\frac{1}{\abs{I}^{1/2}} \abs{I}^{1/p_i}\right) \prod_{j=k+1}^m \abs{I}^{1/p_j}\\
& = & \frac{\abs{I}^{1/r}}{\abs{I}^{k/2}}
\end{eqnarray*}
Since $(f_1, f_2, \ldots,f_m) \in L^{p_1}\times L^{p_2} \times \cdots\times L^{p_m}$, the boundedness of $T_\epsilon$ implies that

$${\norm{T_\epsilon^{\vec{\alpha}}(f_1,f_2, \ldots,f_m)}}_r \leq  {\norm{T_\epsilon^{\vec{\alpha}}}}_{L^{p_1}\times \cdots\times L^{p_m} \rightarrow L^r} \prod_{j=1}^m\norm{f_j}_{p_j}.$$
That is,$$\frac{\abs{\epsilon_I}}{\abs{I}^{k/2}} \abs{I}^{1/r} \leq {\norm{T_\epsilon^{\vec{\alpha}}}}_{L^{p_1}\times \cdots\times L^{p_m}} \frac{\abs{I}^{1/r}}{\abs{I}^{k/2}},$$
for all $I \in \D.$ Consequently, $\norm{\epsilon}_\infty = \displaystyle\sup_{I\in \mathcal{D}} \abs{\epsilon_I} \leq {\norm{T_\epsilon^{\vec{\alpha}}}}_{L^{p_1}\times \cdots\times L^{p_m}} < \infty,$ as desired. \\
If $1 \leq p_1,p_2, \ldots,p_m<\infty,$ the weak-type boundedness of $T_\epsilon^{\vec{\alpha}}$ follows from Lemma \ref{WBL}.
\end{proof}

\subsection{Multilinear commutators}
\noindent
In this subsection we study boundedness properties of the commutators of $T_\epsilon^{\vec{\alpha}}$ with the multiplication operator $M_b$ when $b\in BMO^d.$ For convenience we denote the operator $M_b$ by $b$ itself. We are interested in the following commutators:
$$[b,T_\epsilon^{\vec{\alpha}}]_i(f_1,f_2,\ldots,f_m)(x) \equiv (T_\epsilon^{\vec{\alpha}}(f_1, \ldots, bf_i,\ldots,f_m) - bT_\epsilon^{\vec{\alpha}}(f_1,f_2,\ldots,f_m))(x)$$

\noindent
where $1\leq i \leq m.$\\

\noindent
Note that if $b$ is a constant function, $[b,T_\epsilon^{\vec{\alpha}}]_i(f_1,f_2,\ldots,f_m)(x) = 0$ for all $x.$ Our approach to study the boundedness properties of $[b,T_\epsilon^{\vec{\alpha}}]_i: L^{p_1}\times L^{p_2} \times \cdots\times L^{p_m}\rightarrow L^r$ with $1<p_1,p_2, \ldots,p_m < \infty$ and $\displaystyle \sum_{j=1}^m \frac{1}{p_j} = \frac{1}{r}$  for non-constant $b$ requires us to assume that $b \in L^p$ for some $p\in (1,\infty),$ and that $r > 1.$ However, this restricted unweighted theory turns out to be sufficient to obtain a weighted theory, which in turn implies the unrestricted unweighted theory of these multilinear commutators. We will present the weighted theory of these commutators in a subsequent paper.\\

\begin{thm}\label{boc}
Let $\vec{\alpha} = (\alpha_1,\alpha_2,\ldots,\alpha_m) \in U_m.$  If $b \in BMO^d \cap L^p$ for some $1<p<\infty$ and $\norm{\epsilon}_\infty := \sup_{I\in \mathcal{D}}  \abs{\epsilon_I}  < \infty,$  then each commutator $[b,T_\epsilon^{\vec{\alpha}}]_i$ is bounded from $L^{p_1}\times L^{p_2} \times \cdots\times L^{p_m}\rightarrow L^r$  for all $1<p_1,p_2, \ldots,p_m, r < \infty$ with 
$$\sum_{j=1}^m \frac{1}{p_j} = \frac{1}{r},$$
 with estimates of the form:
$$ \norm{[b,T_\epsilon^{\vec{\alpha}}]_i(f_1,f_2,\ldots,f_m)}_r \lesssim \norm{b}_{BMO^d}\prod_{j=1}^m\norm{f_j}_{p_j}.$$ 

\end{thm}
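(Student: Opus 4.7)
The plan is to decompose the commutator via the paraproduct identity
\[
bg = \pi_b(g) + \pi_b^*(g) + \pi_g(b) = \sum_{\vec\beta\in U_2} P^{\vec\beta}(b,g),
\]
applied both to $bf_i$ (inside the $i$-th slot of $T_\epsilon^{\vec\alpha}$) and to $bT_\epsilon^{\vec\alpha}(\vec f)$. By multilinearity this yields
\[
[b,T_\epsilon^{\vec\alpha}]_i(\vec f) = \sum_{\vec\beta\in U_2}\Bigl[T_\epsilon^{\vec\alpha}\bigl(f_1,\ldots,P^{\vec\beta}(b,f_i),\ldots,f_m\bigr) - P^{\vec\beta}\bigl(b,T_\epsilon^{\vec\alpha}(\vec f)\bigr)\Bigr],
\]
a total of six terms.

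The four terms with $\vec\beta\in\{(0,0),(0,1)\}$---those involving $\pi_b$ or $\pi_b^*$---are immediate. For the inner-slot pieces, the linear paraproduct bound $\|\pi_b(f_i)\|_{p_i}+\|\pi_b^*(f_i)\|_{p_i}\lesssim \|b\|_{BMO^d}\|f_i\|_{p_i}$ combined with Theorem~\ref{MHMTh} yields the desired $\|b\|_{BMO^d}\prod_j\|f_j\|_{p_j}$ estimate. For the outer-slot pieces, the linear bound $\|\pi_b(g)\|_r+\|\pi_b^*(g)\|_r\lesssim \|b\|_{BMO^d}\|g\|_r$---valid since $r>1$, which is precisely where that hypothesis enters---applied with $g=T_\epsilon^{\vec\alpha}(\vec f)$ and combined with Theorem~\ref{MHMTh} gives the same estimate.

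The two remaining terms, corresponding to $\vec\beta=(1,0)$, are $T_\epsilon^{\vec\alpha}(\ldots,\pi_{f_i}(b),\ldots)$ and $\pi_{T_\epsilon^{\vec\alpha}(\vec f)}(b)$; neither is bounded in isolation, because the operator $g\mapsto \pi_g(b)=\sum_I\langle b\rangle_I \widehat g(I)h_I$ is a Haar multiplier whose symbol $\{\langle b\rangle_I\}$ is uniformly bounded only when $b\in L^\infty$. The key is to treat the two terms \emph{together}: expanding both Haar-coefficient-wise and using the identity $h_I^k=|I|^{-\lfloor k/2\rfloor}h_I^{k\bmod 2}$, one finds substantial cancellation. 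In simpler subcases (for instance $\sigma(\vec\alpha)=1$ with $\alpha_i=0$) the two terms are identical and cancel outright; in the remaining subcases their difference reorganizes into paraproducts of the form $\sum_I \epsilon_I \widehat b(I)\prod_{j}f_j(I,\alpha_j)\,h_I^{1+\sigma(\vec\alpha)}$, together with analogous lower-order expressions. Since $1+\sigma(\vec\alpha)\ge 2$, Lemma~\ref{MPPTh2}(b) applies and gives boundedness provided $\sup_I |\widehat b(I)|/\sqrt{|I|}<\infty$---which follows from $b\in BMO^d$ via the $BMO_2^d$ equivalence, as $|\widehat b(I)|^2/|I|\le \|b\|_{BMO_2^d}^2\lesssim \|b\|_{BMO^d}^2$.

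The principal obstacle is this last cancellation step: the bookkeeping requires separate handling of the subcases determined by $\alpha_i\in\{0,1\}$ and by the parity of $\sigma(\vec\alpha)$, and absorbing the Haar-multiplier symbol $\epsilon$ into the paraproduct estimate. The hypothesis $b\in L^p$ ensures that the Haar expansion of $b$ converges in norm, justifying the rearrangements of sums used throughout the cancellation argument.
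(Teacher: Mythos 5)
Your overall architecture is close to the paper's: both proofs strip off the pieces of $bf_i$ and of $b\,T_\epsilon^{\vec{\alpha}}(f_1,\ldots,f_m)$ that involve $\pi_b$ and $\pi_b^*$ (these are indeed immediate from the linear theory together with Theorem~\ref{MHMTh}, and you correctly identify this as where $r>1$ enters), and both reduce the matter to cancelling the two remaining ``bad'' pieces against each other. Your check that they coincide when $\sigma(\vec{\alpha})=1$ and $\alpha_i=0$ is correct. The gap is in the claim that in the remaining subcases the difference ``reorganizes into paraproducts of the form $\sum_I\epsilon_I\widehat{b}(I)\prod_jf_j(I,\alpha_j)h_I^{1+\sigma(\vec{\alpha})}$, together with analogous lower-order expressions'': that is where the entire proof lives, and the asserted form of the residue is wrong. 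Already for $m=2$, $\vec{\alpha}=(1,0)$, $i=1$, the identity $\La b\Ra_I\La f_1\Ra_I\mathsf{1}_I-\La\pi_{f_1}(b)\Ra_I\mathsf{1}_I=\La\pi_b(f_1)\Ra_I\mathsf{1}_I+\sum_{I\subsetneq J}\widehat{b}(J)\widehat{f_1}(J)h_J^2\,\mathsf{1}_I$ shows that the difference of your two bad terms produces, besides the harmless $-T_\epsilon^{\vec{\alpha}}(\pi_b(f_1),f_2)$, the nested sum $\sum_{J}\widehat{b}(J)\widehat{f_1}(J)h_J^2\bigl(\sum_{I\subsetneq J}\epsilon_I\widehat{f_2}(I)h_I\bigr)$. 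This is not a paraproduct covered by Lemma~\ref{MPPTh2}, and it is not lower order; the paper controls it only by expanding the full product $\pi_b^*(f_1)\cdot T_\epsilon^{(\alpha_2,\ldots,\alpha_m)}(f_2,\ldots,f_m)$ into diagonal, upper- and lower-triangular parts, bounding the product and two of the parts, and getting the third by subtraction. Likewise the off-diagonal piece $\sum_{J}\widehat{b}(J)h_J\bigl(\sum_{J\subsetneq I}\epsilon_I\prod_jf_j(I,\alpha_j)h_I^{\sigma(\vec{\alpha})}\bigr)$ has to be recognized as $\pi_b(T_\epsilon^{\vec{\alpha}}(f_1,\ldots,f_m))$ when $\sigma(\vec{\alpha})$ is odd and subjected to a further bilinear paraproduct decomposition when $\sigma(\vec{\alpha})$ is even. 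None of this appears in your outline, so the central step is asserted rather than proved.

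Two smaller inaccuracies point the same way. First, when $\sigma(\vec{\alpha})=1$ the diagonal term $\sum_I\epsilon_I\widehat{b}(I)\prod_jf_j(I,\alpha_j)h_I^{1+\sigma(\vec{\alpha})}$ falls under Lemma~\ref{MPPTh2}(a), not (b): the case split in that lemma is on $\sigma(\vec{\alpha})$, the number of $0$'s among the $f_j$-slots, not on the exponent of $h_I$. This is harmless here because $b\in BMO^d$ is assumed, but the citation as written is incorrect. Second, to invoke the paraproduct lemmas at all you must first absorb $\epsilon_I$ into one of the Haar coefficients $\widehat{f_j}(I)$ via a linear Haar multiplier, which uses the existence of some $j$ with $\alpha_j=0$; you flag this as an obstacle but do not carry it out. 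In short: the decomposition and the four easy terms are fine, but the cancellation argument that constitutes the substance of the theorem is missing.
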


\noindent
\begin{proof} It suffices to prove boundedness of $[b,T_\epsilon^{\vec{\alpha}}]_1,$ as the others are identical. Moreover, we may assume that each $f_i$ is bounded and has compact support, since such functions are dense in the $L^p$ spaces.\\

\noindent
Writing $bf_1 = \pi_b(f_1) + \pi_b^*(f_1) + \pi_{f_1}(b)$ and using multilinearity of $T_\epsilon^{\vec{\alpha}}$, we have\\
$$T_\epsilon^{\vec{\alpha}} (bf_1, f_2, \ldots,f_m) 
= T_\epsilon^{\vec{\alpha}} (\pi_b(f_1),f_2, \ldots,f_m) + T_\epsilon^{\vec{\alpha}} (\pi_b^*(f_1),f_2, \ldots,f_m) + T_\epsilon^{\vec{\alpha}} (\pi_{f_1}(b),f_2, \ldots,f_m).$$

\noindent
On the other hand,
\begin{eqnarray*}
b T_\epsilon^{\vec{\alpha}} (f_1,f_2, \ldots,f_m) 
&=& \sum_{I \in \D}\epsilon_I \prod_{j=1}^m f_j(I,\alpha_j) h_I^{\sigma(\vec{\alpha})}\left(\sum_{J \in \D} \widehat{b}(J) h_J\right)\\
&=& \sum_{I \in \D}\epsilon_I \widehat{b}(I)\prod_{j=1}^m f_j(I,\alpha_j) h_I^{1+\sigma(\vec{\alpha})}\\
&& +\sum_{I \in \D}\epsilon_I \prod_{j=1}^m f_j(I,\alpha_j) h_I^{\sigma(\vec{\alpha})}\left(\sum_{I\subsetneq J} \widehat{b}(J) h_J\right)\\
&& +\sum_{I \in \D}\epsilon_I \prod_{j=1}^m f_j(I,\alpha_j) h_I^{\sigma(\vec{\alpha})}\left(\sum_{J\subsetneq I} \widehat{b}(J) h_J\right)\\
&=& \pi_b^{\vec{\alpha}} (f_1, \ldots, T_\epsilon(f_i), \ldots,f_m) \\
&& + \sum_{I \in \D}\epsilon_I \La b\Ra_I\prod_{j=1}^m f_j(I,\alpha_j) h_I^{\sigma(\vec{\alpha})} \\
&&+ \sum_{J\in\D} \widehat{b}(J) h_J\left(\sum_{J\subsetneq I}\epsilon_I \prod_{j=1}^m f_j(I,\alpha_j) h_I^{\sigma(\vec{\alpha})}\right)
\end{eqnarray*}
for some $i$ with $\alpha_i = 0.$ Indeed, some $\alpha_i$ equals 0 by assumption, and for such $i$, we have
$$T_\epsilon(f_i)(I,\alpha_i) = \widehat{T_\epsilon(f_i)}(I) = \epsilon_I \widehat{f_i}(I) = \epsilon_I f_i(I,\alpha_i) .$$

\noindent
For $(f_1,f_2,\ldots,f_m) \in L^{p_1}\times L^{p_2} \times \cdots\times L^{p_m},$ we have
\begin{eqnarray*}
\norm{T_\epsilon^{\vec{\alpha}} (\pi_b(f_1),f_2, \ldots,f_m)}_r &\lesssim & \norm{\pi_b(f_1)}_{p_1}\prod_{j=2}^m \norm{f_j}_{p_j}\\
&\lesssim & \norm{b}_{BMO^d}\prod_{j=1}^m\norm{f_j}_{p_j}
\end{eqnarray*}
\begin{eqnarray*}
\norm{T_\epsilon^{\vec{\alpha}} (\pi_b^*(f_1),f_2, \ldots,f_m)}_r &\lesssim & \norm{\pi_b^*(f_1)}_{p_1}\prod_{j=2}^m \norm{f_j}_{p_j}\\
&\lesssim & \norm{b}_{BMO^d}\prod_{j=1}^m\norm{f_j}_{p_j}.
\end{eqnarray*}
and,
\begin{eqnarray*}
\norm{\pi_b^{\vec{\alpha}} (f_1, \ldots,T_\epsilon(f_i), \ldots,f_m)}_r &\lesssim & \norm{b}_{BMO^d}\norm{f_1}_{p_1} \cdots \norm{T_\epsilon(f_i)}_{p_i}\cdots\norm{f_m}_{p_m}\\
&\lesssim& \norm{b}_{BMO^d}\prod_{j=1}^m\norm{f_j}_{p_j}.\\
\end{eqnarray*}

\noindent
So, to prove boundedness of $[b,T_\epsilon^{\vec{\alpha}}]_1$, is suffices to show similar control over the terms:
\begin{equation} \label{term1}\left\Vert \sum_{J\in\D} \widehat{b}(J) h_J\left(\sum_{J\subsetneq I}\epsilon_I \prod_{j=1}^m f_j(I,\alpha_j) h_I^{\sigma(\vec{\alpha})}\right)\right\Vert_r
\end{equation}
and,
\begin{equation}\label{term2}\left\Vert T_\epsilon^{\vec{\alpha}} (\pi_{f_1}(b),f_2, \ldots,f_m)- \sum_{I \in \D}\epsilon_I \La b\Ra_I \prod_{j=1}^m f_j(I,\alpha_j)  h_I^{\sigma(\vec{\alpha})}\right\Vert_r. 
\end{equation}

\noindent
\textbf{Estimation of} $(\ref{term1})$:\\ 

\noindent
Case I: $\sigma(\vec{\alpha})$ odd.\\
In this case,
$$T_\epsilon^{\vec{\alpha}} (f_1,f_2, \ldots,f_m) = \sum_{I \in \D}\epsilon_I \prod_{j=1}^m f_j(I,\alpha_j) h_I^{\sigma(\vec{\alpha})} = \sum_{I \in \D}\epsilon_I\abs{I}^{\frac{1-\sigma(\vec{\alpha})}{2}} \prod_{j=1}^m f_j(I,\alpha_j) h_I. $$
So, $$  \La T_\epsilon^{\vec{\alpha}} (f_1,f_2, \ldots,f_m), h_I \Ra h_I = \epsilon_I\abs{I}^{\frac{1-\sigma(\vec{\alpha})}{2}} \prod_{j=1}^m f_j(I,\alpha_j) h_I 
=  \epsilon_I \prod_{j=1}^m f_j(I,\alpha_j)h_I^{\sigma(\vec{\alpha})}.
$$
This implies that
\begin{eqnarray*}
(\ref{term1}) &=&\left\Vert \sum_{J\in\D} \widehat{b}(J) h_J\left(\sum_{J\subsetneq I}\La T_\epsilon^{\vec{\alpha}} (f_1,f_2, \ldots,f_m), h_I \Ra h_I\right)\right\Vert_r\\
&= & \left\Vert \sum_{J\in\D} \widehat{b}(J)\La T_\epsilon^{\vec{\alpha}} (f_1,f_2, \ldots,f_m) \Ra_J h_J\right\Vert_r\\
&=& \left\Vert \pi_b \left( T_\epsilon^{\vec{\alpha}} (f_1,f_2, \ldots,f_m) \right)\right\Vert_r\\
&\lesssim & \norm{b}_{BMO^d}\left\Vert T_\epsilon^{\vec{\alpha}} (f_1,f_2, \ldots,f_m) \right\Vert_r\\
&\lesssim & \norm{b}_{BMO^d}\prod_{j=1}^m\norm{f_j}_{p_j}.
\end{eqnarray*}

\noindent
Case II: $\sigma(\vec{\alpha})$ even.\\

\noindent
In this case at least two $\alpha_i's$ are equal to 0. Without loss of generality we may assume that $\alpha_1=0.$ Then denoting $T_\epsilon (f_1)$ by $g_1,$ $P^{(\alpha_2,\ldots,\alpha_m)}(f_2, \ldots,f_m)$ by $ g_2,$ and using the fact that
$$\La g_1 \Ra_J \La g_2 \Ra_J \mathsf{1}_J = \left(\sum_{J\subsetneq I}\widehat{g_1}(I) \La{g_2}\Ra_I h_I  +\sum_{J\subsetneq I}\La g_1\Ra_I \widehat{g_2}(I) h_I +\sum_{J\subsetneq I}\widehat{g_1}(I) \widehat{g_2}(I) h_I^2 \right)\mathsf{1}_J, $$ we have
\begin{eqnarray*}
&& \left\Vert \sum_{J\in\D} \widehat{b}(J) h_J\left(\sum_{J\subsetneq I}\epsilon_I \prod_{j=1}^m f_j(I,\alpha_j) h_I^{\sigma(\vec{\alpha})}\right)\right\Vert_r\\  
&=&\left\Vert \sum_{J\in\D} \widehat{b}(J) h_J\left(\sum_{J\subsetneq I}\widehat{g_1}(I) \widehat{g_2}(I) h_I^2 \right)\right\Vert_r\\
&= & \left\Vert \sum_{J\in\D} \widehat{b}(J) h_J\left(\La g_1 \Ra_J \La g_2 \Ra_J \mathsf{1}_J - \sum_{J\subsetneq I}\widehat{g_1}(I) \La{g_2}\Ra_I h_I  -\sum_{J\subsetneq I}\La g_1\Ra_I \widehat{g_2}(I) h_I \right)\right\Vert_r\\
&\leq & \left\Vert \sum_{J\in\D} \widehat{b}(J) \La g_1 \Ra_J \La g_2 \Ra_J h_J\right\Vert_r +\left\Vert \sum_{J\in\D} \widehat{b}(J) \La P^{(0,1)}(g_1,g_2) \Ra_J h_J \right\Vert_r\\
&& \qquad \qquad +\left\Vert \sum_{J\in\D} \widehat{b}(J) \La P^{(1,0)}(g_1,g_2) \Ra_J h_J\right\Vert_r \\
&\lesssim & \norm{b}_{BMO^d}\norm{g_1}_{p_1}\norm{g_2}_{q} + \norm{b}_{BMO^d}\norm{P^{(0,1)}(g_1,g_2)}_{r} +\norm{b}_{BMO^d}\norm{P^{(1,0)}(g_1,g_2)}_r\\
&\lesssim & \norm{b}_{BMO^d}\norm{g_1}_{p_1}\norm{g_2}_{q}\\
&\lesssim & \norm{b}_{BMO^d}\prod_{j=1}^m\norm{f_j}_{p_j}.
\end{eqnarray*}
where, $q$ is given by $\displaystyle \frac{1}{q} = \sum_{j=2}^m \frac{1}{p_j}.$ Here the last three inequalities follow from Theorems $\ref{MPPTh1}$ and $\ref{MPPTh2},$ and the fact that $\norm{g_1}_{p_1} = \norm{T_\epsilon (f_1)}_{p_1} \lesssim \norm{f_1}_{p_1}.$ \\

\noindent
\textbf{Estimation of} $(\ref{term2}):$\\

\noindent
Case I: $\alpha_1 = 0.$\\

\noindent
This case is easy as we observe that\\
\begin{eqnarray*}
&& T_\epsilon^{\vec{\alpha}} (\pi_{f_1}(b),f_2, \ldots,f_m)- \sum_{I \in \D}\epsilon_I \La b\Ra_I \prod_{j=1}^m f_j(I,\alpha_j) h_I^{\sigma(\vec{\alpha})}\\
&=& \sum_{I \in \D}\epsilon_I \widehat{\pi_{f_1}(b)}(I) \prod_{j=2}^m f_j(I,\alpha_j) h_I^{\sigma(\vec{\alpha})} - \sum_{I \in \D}\epsilon_I \La b\Ra_I \widehat{f_1}(I) \prod_{j=2}^m f_j(I,\alpha_j) h_I^{\sigma(\vec{\alpha})}\\
& =& \sum_{I \in \D}\epsilon_I \La b\Ra_I \widehat{f_1}(I)\prod_{j=2}^m f_j(I,\alpha_j) h_I^{\sigma(\vec{\alpha})} - \sum_{I \in \D}\epsilon_I \La b\Ra_I \widehat{f_1}(I) \prod_{j=2}^m f_j(I,\alpha_j) h_I^{\sigma(\vec{\alpha})}\\
&=& 0.
\end{eqnarray*}
So there is nothing to estimate.
\newpage
\noindent
Case II: $\alpha_1 = 1.$\\

\noindent
In this case,
\begin{eqnarray*}
&& T_\epsilon^{\vec{\alpha}} (\pi_{f_1}(b),f_2, \ldots,f_m) - \sum_{I \in \D}\epsilon_I \La b\Ra_I \prod_{j=1}^m f_j(I,\alpha_j) h_I^{\sigma(\vec{\alpha})}\\
&=& \sum_{I \in \D}\epsilon_I \La \pi_{f_1}(b) \Ra_I \prod_{j=2}^m f_j(I,\alpha_j) h_I^{\sigma(\vec{\alpha})} - \sum_{I \in \D}\epsilon_I \La b\Ra_I \La f_1\Ra_I \prod_{j=2}^m f_j(I,\alpha_j) h_I^{\sigma(\vec{\alpha})}\\
& =& \sum_{I \in \D}\epsilon_I \left(\La \pi_{f_1}(b) \Ra_I - \La b\Ra_I \La f_1\Ra_I \right)\prod_{j=2}^m f_j(I,\alpha_j) h_I^{\sigma(\vec{\alpha})}\\
\end{eqnarray*}

\noindent
Now,
\begin{eqnarray*}\La b \Ra_I \La f_1 \Ra_I \mathsf{1}_I &=& \sum_{I\subsetneq J}\widehat{b}(J) \La{f_1}\Ra_J h_J \mathsf{1}_I +\sum_{I\subsetneq J}\La b\Ra_J \widehat{f_1}(J) h_J \mathsf{1}_I+\sum_{I\subsetneq J}\widehat{b}(J) \widehat{f_1}(J) h_J^2\mathsf{1}_I\\
&=& \La \pi_b(f_1)\Ra_I \mathsf{1}_I +\La \pi_{f_1}(b)\Ra_I  \mathsf{1}_I+ \sum_{I\subsetneq J}\widehat{b}(J) \widehat{f_1}(J) h_J^2\mathsf{1}_I.
\end{eqnarray*}
Hence, $\La b \Ra_I \La f_1 \Ra_I \mathsf{1}_I -\La \pi_{f_1}(b)\Ra_I \mathsf{1}_I= \La \pi_b(f_1)\Ra_I \mathsf{1}_I + \displaystyle \sum_{I\subsetneq J}\widehat{b}(J) \widehat{f_1}(J) h_J^2 \mathsf{1}_I.$\\

\noindent
So we have 
\begin{eqnarray*}
&& T_\epsilon^{\vec{\alpha}} (\pi_{f_1}(b),f_2, \ldots,f_m)- \sum_{I \in \D}\epsilon_I \La b\Ra_I \prod_{j=1}^m f_j(I,\alpha_j) h_I^{\sigma(\vec{\alpha})}\\
&& = -\sum_{I \in \D}\epsilon_I \left( \La \pi_b(f_1)\Ra_I \mathsf{1}_I + \sum_{I\subsetneq J}\widehat{b}(J) \widehat{f_1}(J) h_J^2\right)\prod_{j=2}^m f_j(I,\alpha_j) h_I^{\sigma(\vec{\alpha})}\\
&& = -\sum_{I \in \D}\epsilon_I  \La \pi_b(f_1)\Ra_I \prod_{j=2}^m f_j(I,\alpha_j) h_I^{\sigma(\vec{\alpha})} \\
&& \hspace{1in} -\sum_{I \in \D}\epsilon_I \left( \sum_{I\subsetneq J}\widehat{b}(J) \widehat{f_1}(J) h_J^2\right)\prod_{j=2}^m f_j(I,\alpha_j) h_I^{\sigma(\vec{\alpha})}\\
&& = - T_\epsilon(\pi_b(f_1),f_2,\ldots,f_m) - \sum_{J \in \D}\widehat{b}(J) \widehat{f_1}(J) h_J^2 \left(\sum_{I\subsetneq J}\epsilon_I \prod_{j=2}^m f_j(I,\alpha_j) h_I^{\sigma(\vec{\alpha})} \right).\\
\end{eqnarray*}

\noindent
Since $$\norm{T_\epsilon(\pi_b(f_1),f_2,\ldots,f_m)}_r \lesssim \norm{\pi_b(f_1)}_{p_1}\prod_{j=2}^m f_j(J,\alpha_j) \lesssim \norm{b}_{BMO^d}\prod_{j=1}^m\norm{f_j}_{p_j},$$
we are left with controlling $$\left\Vert \sum_{J \in \D}\widehat{b}(J) \widehat{f_1}(J) h_J^2 \left(\sum_{I\subsetneq J}\epsilon_I \prod_{j=2}^m f_j(I,\alpha_j) h_I^{\sigma(\vec{\alpha})} \right)\right\Vert_r. $$

\noindent
For this we observe that
$$\left\Vert T_\epsilon^{(\alpha_2, \ldots,\alpha_m)}(f_2, \ldots,f_m) \right\Vert_q \lesssim \prod_{j=2}^m \norm{f_j}_{p_j},$$
and that
 \begin{eqnarray*}
\pi_b^*(f_1)\; T_\epsilon^{(\alpha_2, \ldots,\alpha_m)}(f_2, \ldots,f_m) 
&=& \sum_{J \in \D}\widehat{b}(J) \widehat{f_1}(J) h_J^2 \left( \sum_{I\subsetneq J}\epsilon_I \prod_{j=2}^m f_j(I,\alpha_j) h_I^{\sigma(\vec{\alpha})}\right)\\
&& + \sum_{J \in \D}\epsilon_J \widehat{b}(J) \widehat{f_1}(J)  \prod_{j=2}^m f_j(J,\alpha_j) h_J^{2+ \sigma(\vec{\alpha})}
\\
&& +\sum_{J \in \D}\widehat{b}(J) \widehat{f_1}(J) h_J^2\left(\sum_{J\subsetneq I}\epsilon_I \prod_{j=2}^m f_j(I,\alpha_j) h_I^{\sigma(\vec{\alpha})} \right)
\end{eqnarray*}

\noindent
Now, following the same technique we used to control $(\ref{term1}),$ we obtain
 $$\displaystyle \left\Vert\sum_{J \in \D}\widehat{b}(J) \widehat{f_1}(J) h_J^2\left(\sum_{J\subsetneq I}\epsilon_I \prod_{j=2}^m f_j(I,\alpha_j) h_I^{\sigma(\vec{\alpha})}\right)\right\Vert_r\lesssim \norm{b}_{BMO^d}\prod_{j=1}^m\norm{f_j}_{p_j}.$$
We also have
\begin{eqnarray*}
\left\Vert \pi_b^*(f_1)\; T_\epsilon^{(\alpha_2, \ldots,\alpha_m)}(f_2, \ldots,f_m) \right\Vert_r 
&\leq& \left\Vert\pi_b^*(f_1)\right\Vert_{p_1} \left\Vert T_\epsilon^{(\alpha_2, \ldots,\alpha_m)}(f_2, \ldots,f_m)\right\Vert_q \\  
&\lesssim& \norm{b}_{BMO^d}\prod_{j=1}^m\norm{f_j}_{p_j}
\end{eqnarray*}
and, $$ \left\Vert\sum_{J \in \D}\epsilon_J \widehat{b}(J) \widehat{f_1}(J)  \prod_{j=2}^m f_j(J,\alpha_j) h_J^{2+ \sigma(\vec{\alpha})}\right\Vert_r \lesssim \norm{b}_{BMO^d}\prod_{j=1}^m\norm{f_j}_{p_j}.$$.

\noindent
So we conclude that $$ \left\Vert\sum_{J \in \D}\widehat{b}(J) \widehat{f_1}(J) h_J^2 \left(\sum_{I\subsetneq J}\epsilon_I \prod_{j=2}^m f_j(I,\alpha_j) h_I^{\sigma(\vec{\alpha})} \right)\right\Vert_r\lesssim\norm{b}_{BMO^d}\prod_{j=1}^m\norm{f_j}_{p_j}.$$
Thus we have strong type boundedness of $$[b,T_\epsilon^{\vec{\alpha}}]_1 \rightarrow L^{p_1}\times L^{p_2} \times \cdots\times L^{p_m}\rightarrow L^r$$  for all $1<p_1,p_2, \ldots,p_m, r < \infty$ with 
$$\displaystyle \sum_{j=1}^m \frac{1}{p_j} = \frac{1}{r}.$$ 
\end{proof} 

\noindent
In the next theorem, we show that BMO condition is necessary for the boundedness of the commutators.\\

\noindent
\begin{thm}\label{bmonecessity}
Let $\vec{\alpha} = (\alpha_1,\alpha_2,\ldots,\alpha_m) \in U_m,$ and  $1<p_1,p_2, \ldots,p_m, r < \infty$ with 
$$\sum_{j=1}^m \frac{1}{p_j} = \frac{1}{r}.$$ Assume that for given $b$ and $i$, 
\begin{equation}
\norm{[b,T_\epsilon^{\vec{\alpha}}]_i(f_1,f_2,\ldots,f_m)}_r \leq C_\epsilon \prod_{j=1}^m\norm{f_j}_{p_j}, 
\label{eq:bd}
\end{equation}
for every bounded sequence $\epsilon = \{\epsilon_I\}_{I\in \D},$ and for all $f_i \in L^{p_i}.$ Then $b\in BMO^d.$
\end{thm}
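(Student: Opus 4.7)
The plan is to specialize the hypothesis to the constant sequence $\epsilon_I \equiv 1$, for which $T_\epsilon^{\vec{\alpha}} = P^{\vec{\alpha}}$. This produces a single constant $C$ with $\norm{[b,P^{\vec{\alpha}}]_i(f_1,\ldots,f_m)}_r \leq C \prod_{j=1}^m \norm{f_j}_{p_j}$. By the equivalence of $\norm{\cdot}_{BMO_r^d}$ and $\norm{\cdot}_{BMO^d}$ recalled in Section~2, it suffices to prove
$$\sup_{J_0 \in \D}\left(\frac{1}{\abs{J_0}}\int_{J_0}\abs{b-\La b\Ra_{J_0}}^r\,dx\right)^{1/r}<\infty.$$
For each $J_0 \in \D$, I would test the commutator on $f_j = \mathsf{1}_{J_0}$ for all $j=1,\ldots,m$.

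The key exact computation is
$$P^{\vec{\alpha}}(\mathsf{1}_{J_0},\ldots,\mathsf{1}_{J_0})(x) = \frac{1}{2^m-1},\qquad x \in J_0.$$
Since $\widehat{\mathsf{1}_{J_0}}(I)$ vanishes for $I \subseteq J_0$ and $\sigma(\vec{\alpha}) \geq 1$, only the ancestors $I \supsetneq J_0$ contribute; for each ancestor $I$ of generation $k$, a short parity analysis on $\sigma(\vec{\alpha})$ using $h_I(J_0)^2 = 1/\abs{I}$ shows that the joint factor $P_I^{\vec{\alpha}}(\mathsf{1}_{J_0},\ldots,\mathsf{1}_{J_0})(x)$ equals $\abs{J_0}^m \abs{I}^{-m} = 2^{-km}$ on $J_0$, and the geometric series sums to $1/(2^m-1)$.

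Next I would compare with $P^{\vec{\alpha}}$ evaluated with $b\mathsf{1}_{J_0}$ in the $i$-th slot. Two sub-cases arise. If some $j_0 \neq i$ satisfies $\alpha_{j_0}=0$, the ancestor restriction persists, the $i$-th factor just scales by $\La b\Ra_{J_0}$, and so
$$[b,P^{\vec{\alpha}}]_i(\mathsf{1}_{J_0},\ldots,\mathsf{1}_{J_0}) = (b-\La b\Ra_{J_0})\,P^{\vec{\alpha}}(\mathsf{1}_{J_0},\ldots,\mathsf{1}_{J_0}),$$
which on $J_0$ equals $(b-\La b\Ra_{J_0})/(2^m-1)$. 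If instead $\sigma(\vec{\alpha})=1$ with the sole zero at slot $i$, the terms with $I \subseteq J_0$ survive and sum to $\sum_{I \subseteq J_0}\widehat{b}(I)h_I = (b-\La b\Ra_{J_0})\mathsf{1}_{J_0}$, yielding
$$[b,P^{\vec{\alpha}}]_i(\mathsf{1}_{J_0},\ldots,\mathsf{1}_{J_0}) = (b-\La b\Ra_{J_0})\bigl[P^{\vec{\alpha}}(\mathsf{1}_{J_0},\ldots,\mathsf{1}_{J_0})-\mathsf{1}_{J_0}\bigr],$$
which on $J_0$ equals $-(2^m-2)(b-\La b\Ra_{J_0})/(2^m-1)$.

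In both sub-cases $\abs{[b,P^{\vec{\alpha}}]_i(\mathsf{1}_{J_0},\ldots,\mathsf{1}_{J_0})(x)} \geq \frac{1}{2^m-1}\abs{b(x)-\La b\Ra_{J_0}}$ for $x \in J_0$. Restricting $L^r$ norms to $J_0$ and invoking the hypothesis bound $C\abs{J_0}^{1/r}$ yields
$$\left(\frac{1}{\abs{J_0}}\int_{J_0}\abs{b-\La b\Ra_{J_0}}^r\,dx\right)^{1/r} \leq (2^m-1)\,C$$
uniformly in $J_0$; thus $b \in BMO_r^d$, and hence $b \in BMO^d$. The main obstacle I expect is the second sub-case, where the term-by-term proportionality between $P^{\vec{\alpha}}(\ldots,b\mathsf{1}_{J_0},\ldots)$ and $P^{\vec{\alpha}}(\mathsf{1}_{J_0},\ldots,\mathsf{1}_{J_0})$ breaks; however the discrepancy is exactly $(b-\La b\Ra_{J_0})\mathsf{1}_{J_0}$, which combines cleanly with $bP^{\vec{\alpha}}(\mathsf{1}_{J_0},\ldots,\mathsf{1}_{J_0})$ to still leave a nonzero multiple of $b-\La b\Ra_{J_0}$ on $J_0$.
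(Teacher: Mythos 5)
Your proof is correct, and it reaches the conclusion by a genuinely different choice of test functions than the paper. The paper also specializes to $\epsilon_I\equiv 1$, but then splits into two cases with different test data: when the sole zero of $\vec{\alpha}$ sits in the commutator slot it takes $f_1=\mathsf{1}_{I_0}$ and $f_j=h_{I_0^{(1)}}$ (the Haar function of the \emph{parent}) for $j>1$, which makes $P^{\vec{\alpha}}(f_1,\ldots,f_m)$ vanish identically and isolates $\sum_{I\subseteq I_0}\widehat{b}(I)h_I$; in all other cases it takes $f_j=h_{I_0}$ or $\mathsf{1}_{I_0}$ according to $\alpha_j$, so that only the single term $I=I_0$ survives and the commutator is exactly $(b-\La b\Ra_{I_0})h_{I_0}^{\sigma(\vec{\alpha})}$ with no series to sum. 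You instead use the single family $f_j=\mathsf{1}_{J_0}$ for all $j$, which forces you to sum over all ancestors of $J_0$ and prove the identity $P^{\vec{\alpha}}(\mathsf{1}_{J_0},\ldots,\mathsf{1}_{J_0})=\tfrac{1}{2^m-1}$ on $J_0$ (your computation is right: the signs $s_I^{\sigma(\vec{\alpha})}$ from the coefficients cancel against those from $h_I^{\sigma(\vec{\alpha})}$, so no parity discussion is actually needed, and each generation contributes $2^{-km}$). Your sub-case analysis is exhaustive and the constants work out: $\tfrac{2^m-2}{2^m-1}\geq\tfrac{1}{2^m-1}$ since $m\geq 2$. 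What the paper's choices buy is the absence of any infinite sum and an exact, one-term expression for the commutator; what yours buys is uniformity (one set of test functions for every $\vec{\alpha}$ and every $i$) at the cost of the geometric series and the care needed in the sub-case where term-by-term proportionality fails. Both arguments rest on the same two background facts --- $\sum_{I\subseteq J_0}\widehat{b}(I)h_I=(b-\La b\Ra_{J_0})\mathsf{1}_{J_0}$ and the equivalence of $BMO_r^d$ with $BMO^d$ --- and both use only the constant symbol, so your proof equally yields the sharper statement that boundedness of $[b,P^{\vec{\alpha}}]_i$ alone forces $b\in BMO^d$.
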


\noindent
\begin{proof}
Without loss of generality we may assume that $i=1.$ Fix $I_0 \in \D$ and let $\epsilon = \{\epsilon_I\}_{I\in \D}$ with $\epsilon_I =1$ for all $I\in \D.$  \\
\noindent
\textbf{Case I:} $\alpha_1 = 0, \sigma(\vec{\alpha}) = 1.$\\
\noindent
Take $f_1 = \mathsf{1}_{I_0}$ and $f_i = h_{I_0^{(1)}}$ for  $i>1$, where $I_0^{(1)}$ is the parent of $I_0.$ Then,
$$T_\epsilon^{\vec{\alpha}}(f_1,f_2,\ldots,f_m)) = \sum_{I\in \D} \La \mathsf{1}_{I_0}, h_I\Ra \La h_{I_0^{(1)}}\Ra_I^{m-1} h_I=0,$$
and, \begin{eqnarray*} T_\epsilon^{\vec{\alpha}}(bf_1, f_2, \ldots, \ldots,f_m) &=& \sum_{I\in \D} \La b\mathsf{1}_{I_0}, h_I\Ra \La h_{I_0^{(1)}}\Ra_I^{m-1} h_I\\
&=& \sum_{I\subseteq I_0} \La b\mathsf{1}_{I_0}, h_I\Ra \left( \frac{K(I_0,I_0^{(1)})}{\sqrt{\left\vert{I_0^{(1)}}\right\vert}}\right)^{m-1}h_I\\
&=& \left( \frac{K(I_0,I_0^{(1)})}{\sqrt{\left\vert{I_0^{(1)}}\right\vert}}\right)^{m-1}\sum_{I\subseteq I_0} \La b, h_I\Ra h_I,
\end{eqnarray*}
where $ K(I_0,I_0^{(1)})$ is either $1$ or $-1$ depending on whether $I_0$ is the right or left half of $I_0^{(1)}.$ \\
\noindent
For the second to last equality we observe that, if $I$ is not a proper subset of $I_0^{(1)},$ $ \La h_{I_0^{(1)}}\Ra_I = 0,$ and that if  $I$ is a proper subset of $I_0^{(1)}$ but is not a subset of $I_0$, then  $\La b\mathsf{1}_{I_0}, h_I\Ra =0.$ Moreover, for $I \subseteq I_0,$ $\La b\mathsf{1}_{I_0}, h_I\Ra = \int_\R{b\mathsf{1}_{I_0} h_I} = \int_\R{b h_I} = \La b, h_I\Ra.$\\

\noindent
Now from inequality \eqref{eq:bd}, we get
$$ \left\Vert \left( \frac{K(I_0,I_0^{(1)})}{\sqrt{\left\vert{I_0^{(1)}}\right\vert}}\right)^{m-1}\sum_{I\subseteq I_0} \La b, h_I\Ra h_I \right\Vert_r  \leq C_\epsilon \abs{I_0}^{\frac{1}{p_1}} \prod_{i=2}^{m}\frac{\abs{I_0^{(1)}}^{\frac{1}{p_i}}} {\sqrt{\abs{I_0^{(1)}}}}$$
$$i.e. \quad  \left\Vert \sum_{I\subseteq I_0} \La b, h_I\Ra h_I \right\Vert_r \leq 2^{\frac{1}{p_2}+ \cdots+\frac{1}{p_m}} C_\epsilon \abs{I_0}^{\frac{1}{r}}.$$
Thus for every $I_0 \in \D,$ $$\frac{1}{\abs{I_0}^{\frac{1}{r}}}\left\Vert \sum_{I\subseteq I_0} \La b, h_I\Ra h_I \right\Vert_r \leq 2^{\frac{1}{p_2}+ \cdots+\frac{1}{p_m}} C_\epsilon ,$$ and hence $b \in BMO^d.$\\

\noindent
\textbf{Case II:} $\alpha_1 \neq 0$ \, or \, $\sigma(\vec{\alpha}) > 1.$\\
\noindent
Taking    
$f_i = 
     \begin{cases}
       h_{I_0}, &\text{if }\alpha_i = 0\\
       \mathsf{1}_{I_0}, \;\;\; & \text{if }\alpha_i = 1,\\
            \end{cases}
$\; we observe that
$$T_\epsilon^{\vec{\alpha}}(f_1,f_2,\ldots,f_m)) = h_{I_0}^{\sigma(\vec{\alpha})} \;\;\text{ and } \;\;\;T_\epsilon^{\vec{\alpha}}(bf_1, f_2, \ldots, \ldots,f_m) = (bf_1)(I_0, \alpha_1)h_{I_0}^{\sigma(\vec{\alpha})}. $$
\noindent
If $\alpha_1 = 0, $ 
 $$ (bf_1)(I_0, \alpha_1) = {bh_{I_0}}(I_0, 0) = \widehat{bh_{I_0}}(I_0)  = \int_\R{ bh_{I_0}h_{I_0}} = \frac{1}{\abs{I_0}}\int_\R{ b \mathsf{1}_{I_0}} = \La b \Ra_{I_0}.$$
\noindent
If $\alpha_1 = 1,$
 $$ (bf_1)(I_0, \alpha_1) = {b\mathsf{1}_{I_0}}(I_0, 1) = \La {b\mathsf{1}_{I_0}}\Ra_{I_0} = \La {b}\Ra_{I_0}.$$
\noindent
So in each case,
\begin{eqnarray*}
\norm{[b,T_\epsilon^{\vec{\alpha}}]_1(f_1,f_2,\ldots,f_m)}_r &=& \left\Vert{bT_\epsilon^{\vec{\alpha}}(f_1,f_2,\ldots,f_m)) - T_\epsilon^{\vec{\alpha}}(bf_1, f_2, \ldots, \ldots,f_m)}\right\Vert_r\\
&=& \left\Vert{b h_{I_0}^{\sigma(\vec{\alpha})} - \La b \Ra_{I_0} h_{I_0}^{\sigma(\vec{\alpha})}}\right\Vert_r\\
&=& \left\Vert{(b  - \La b \Ra_{I_0}) h_{I_0}^{\sigma(\vec{\alpha})}}\right\Vert_r\\
&=& \frac{1}{(\sqrt{\abs{I_0}})^{\sigma(\vec{\alpha})}}\norm{(b  - \La b \Ra_{I_0}) \mathsf{1}_{I_0}}_r.\\
\end{eqnarray*}
\noindent
On the other hand, $$\prod_{j=1}^m\norm{f_j}_{p_j} = \frac{1}{(\sqrt{\abs{I_0}})^{\sigma(\vec{\alpha})}} \abs{I_0}^{\frac{1}{p_1}+ \cdots + \frac{1}{p_m}} = \frac{1}{(\sqrt{\abs{I_0}})^{\sigma(\vec{\alpha})}} \abs{I_0}^{\frac{1}{r}}. $$
Inequality \eqref{eq:bd} then gives $$ \frac{1}{(\sqrt{\abs{I_0}})^{\sigma(\vec{\alpha})}}\norm{(b  - \La b \Ra_{I_0}) \mathsf{1}_{I_0}}_r \leq C_\epsilon \frac{1}{(\sqrt{\abs{I_0}})^{\sigma(\vec{\alpha})}} \abs{I_0}^{\frac{1}{r}}$$
$$ \text{i.e. }  \quad \frac{1}{\abs{I_0}^{\frac{1}{r}}}\norm{(b  - \La b \Ra_{I_0}) \mathsf{1}_{I_0}}_r \leq C_\epsilon. $$
Since this is true for any $I_0 \in \D$, we have $b\in BMO^d.$
\end{proof}

\noindent
Combining the results from Theorems \ref{boc} and \ref{bmonecessity}, we have the following characterization of the dyadic BMO functions. Note that if $\epsilon_I = 1$ for every $I \in \D$, we have $T_\epsilon^{\vec{\alpha}} = P^{\vec{\alpha}},$ and that in the proof of Theorem \ref{bmonecessity}, only the boundedness of $[b, T_\epsilon^{\vec{\alpha}}]_i$ for $\epsilon$ with $\epsilon_I = 1$ for all $I\in \D$ was used to show that $b\in BMO^d.$\\
\begin{thm}
Let $\vec{\alpha} = (\alpha_1,\alpha_2,\ldots,\alpha_m) \in U_m,$ $1\leq i \leq m,$ and $1<p_1,p_2, \ldots,p_m, r < \infty$ with 
$$\sum_{j=1}^m \frac{1}{p_j} = \frac{1}{r}.$$ Suppose $b \in L^p$ for some $p \in (1,\infty).$ Then the following two statements are equivalent.
\begin{enumerate}[label = $(\alph*)$]
\item  $b\in BMO^d.$\\
\item  $\displaystyle [b,T_\epsilon^{\vec{\alpha}}]_i:L^{p_1}\times L^{p_2} \times \cdots\times L^{p_m}\rightarrow L^r $ is bounded for every bounded sequence $\epsilon = \{\epsilon_I\}_{I\in \D}.$
\end{enumerate}

\noindent
In particular,  $b\in BMO^d$ if and only if
$[b,P^{\vec{\alpha}}]_i:L^{p_1}\times L^{p_2} \times \cdots\times L^{p_m}\rightarrow L^r$ is bounded.\\
\end{thm}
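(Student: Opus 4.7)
The plan is to simply package the two preceding results, Theorems \ref{boc} and \ref{bmonecessity}, into a single equivalence. The theorem is essentially a corollary of these, so the work will be mostly bookkeeping rather than new estimates.

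First, for the direction $(a) \Rightarrow (b)$, I would invoke Theorem \ref{boc} directly. Assuming $b \in BMO^d$ and also $b \in L^p$ for some $1 < p < \infty$ (which is part of the hypothesis of the theorem being stated), that theorem asserts that for any bounded symbol sequence $\epsilon = \{\epsilon_I\}_{I \in \D}$, every commutator $[b, T_\epsilon^{\vec{\alpha}}]_i$ is bounded from $L^{p_1} \times \cdots \times L^{p_m}$ to $L^r$ with the estimate
\[
\|[b, T_\epsilon^{\vec{\alpha}}]_i(f_1, \ldots, f_m)\|_r \lesssim \|b\|_{BMO^d} \prod_{j=1}^m \|f_j\|_{p_j}.
\]
In particular, the operator norm is finite, which is exactly what $(b)$ asserts.

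Next, for the direction $(b) \Rightarrow (a)$, I would appeal to Theorem \ref{bmonecessity}. The hypothesis in $(b)$ gives boundedness of $[b, T_\epsilon^{\vec{\alpha}}]_i$ for every bounded sequence $\epsilon$, which is precisely the hypothesis \eqref{eq:bd} in Theorem \ref{bmonecessity} (with the constant $C_\epsilon$ being the operator norm). Therefore $b \in BMO^d$, as required.

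Finally, for the "in particular" statement, I would note that when the constant sequence $\epsilon_I = 1$ for all $I \in \D$ is chosen, the multilinear Haar multiplier $T_\epsilon^{\vec{\alpha}}$ coincides by definition with the paraproduct $P^{\vec{\alpha}}$. Thus $(a) \Rightarrow (b)$ applied to this choice gives boundedness of $[b, P^{\vec{\alpha}}]_i$ whenever $b \in BMO^d$. Conversely, reexamining the argument in Theorem \ref{bmonecessity}, the test sequence used in the proof was precisely $\epsilon_I \equiv 1$; so boundedness of $[b, P^{\vec{\alpha}}]_i$ alone already forces $b \in BMO^d$. There is no real obstacle here — the only subtlety worth flagging is the auxiliary assumption $b \in L^p$ for some $p \in (1,\infty)$, which is needed on the sufficiency side to make sense of the paraproduct decomposition $bf_1 = \pi_b(f_1) + \pi_b^*(f_1) + \pi_{f_1}(b)$ used in Theorem \ref{boc}, but plays no role in the necessity direction.
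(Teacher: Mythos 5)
Your proposal is correct and follows exactly the paper's own route: the equivalence is obtained by citing Theorem \ref{boc} for sufficiency and Theorem \ref{bmonecessity} for necessity, and the ``in particular'' clause follows from observing that $T_\epsilon^{\vec{\alpha}} = P^{\vec{\alpha}}$ when $\epsilon_I \equiv 1$ and that the necessity argument only ever tested that constant sequence. Your remark on the role of the auxiliary hypothesis $b \in L^p$ matches the paper's usage as well.
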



\begin{bibdiv}
\begin{biblist}

\normalsize
\bib{BMNT}{article}{
title={Bilinear paraproducts revisited},
  author={B{\'e}nyi, {\'A}.}, 
	author={Maldonado, D.},
	author={Nahmod, A. R.},
	author={Torres, R. H.},
  journal={Mathematische Nachrichten},
  volume={283},
  number={9},
  pages={1257--1276},
  year={2010},
  publisher={Wiley Online Library}}

\bib{Bla}{article}{
author={Blasco, O.},
title={Dyadic BMO, paraproducts and Haar multipliers},
journal={Contemp. Math., Vol 445, Amer. Math. Soc., Providence, RI,},
pages={11-18, MR 2381883}}

\bib{CRW}{article}{
title={Factorization theorems for Hardy spaces in several variables},
  author={Coifman, R.R.}, 
	author={Rochberg, R.},
	author={Weiss, G.},
	journal={Ann. of Math.},
  volume={103},
  pages={611-635},
  year={1976}}
	
\bib{GLLZ}{article}{
author={Grafakos,L.},
author={Liu, L.},
author={Lu, S},
author={Zhao,F.},
title={The multilinear Marcinkiewicz interpolation theorem revisited: The behavior of the constant},
journal={J. Funct. Anal.},
volume={262},
year={2012},
pages={2289-2313}}

\bib{GT}{article}{
author={Grafakos,L.},
author={Torres, R.H.},
title={Multilinear Calder$\acute{\text{o}}$n-Zygmund theory},
journal={Adv. Math.},
volume={165},
year={2002},
number={1},
pages={124-164.}}

\bib{HR}{article}{title={Interpolation by the real method between $BMO$, $L^\alpha (0 < \alpha < \infty)$ and $H^\alpha (0 < \alpha < \infty)$}, author = {Hanks, R.}, journal={Indiana Univ. Math. J.}, volume ={26}, number ={4}, pages={679-689}, year = {1977}}

\bib{Hyt}{article}{
author={Hyt$\ddot{\text{o}}$nen ,Tuomas P.},
title={Representation of singular integrals by dyadic operators, and the $A_2$ theorem},
journal={arXiv:1108.5119},
year={2011}}
	
\bib{Jan}{article}{
author={Janson, S.},
title={BMO and commutators of martingale transforms},
journal={Ann. Inst. Fourier},
volume={31},
number = {1},
year={1981},
pages={265-270}}

\bib{JN}{article}{title={On functions of bounded mean oscillation}, author = {John, F.},author = {Nirenberg, L.}, journal={Comm. Pure Appl. Math.}, volume = {14} year = {1961}, page ={415–426}}

 \bib{LOPTT}{article}{
title={New maximal functions and multiple weights for the multilinear Calder$\acute{\text{o}}$n-Zygmund theory},
  author={Lerner, A.K.}, 
	author={Ombrosi, S.},
	author={P$\acute{\text{e}}$rez, C.},
	author={Torres, R. H.},
	author={Trujillo-Gonz$\acute{\text{a}}$lez, R.},
  journal={Adv. in Math.},
  volume={220},
  number={4},
  pages={1222--1264},
  year={2009},
  publisher={Wiley Online Library}}
	
	\bib{Per}{article}{
author={Pereyra, M.C.},
title={Lecture notes on dyadic harmonic analysis},
journal={Contemporary Mathematics},
volume={289},
date={2001},
pages={1-60}}

\bib{SE}{book}{title={Harmonic Analysis: Real Variable Methods, Orthogonality, and Oscillatory Integrals}, author = {Stein, E. M.}, publisher ={Princeton Univ. Press, Princeton}, year = {1993}}

\bib{Treil}{article}{
author={Treil, S.},
title={Commutators, paraproducts and BMO in non-homogeneous martingale settings},
journal={http://arxiv.org/pdf/1007.1210v1.pdf}}

\end{biblist}
\end{bibdiv}


\end{document}